\documentclass[11pt]{amsart}
 
\usepackage[euler-digits]{eulervm}

\usepackage{amsfonts, amsthm, amssymb, amsmath, stmaryrd}
\usepackage{mathrsfs,array}
\usepackage{eucal,fullpage,times,color,enumerate,accents}
\usepackage{url}

\usepackage{xcolor}
\usepackage{mathrsfs}
\usepackage{amssymb}
\usepackage{tikz}
\usepackage{tikz-cd}
\usepackage{bm}
\usepackage{enumerate}
\usetikzlibrary{calc}
\usepackage{subfigure,xypic}

\setlength{\marginparwidth}{1in}
\setlength{\marginparsep}{0in}
\setlength{\marginparpush}{0.1in}
\setlength{\topmargin}{0in}
\setlength{\headheight}{0pt}
\setlength{\headsep}{0pt}
\setlength{\footskip}{.3in}
\setlength{\textheight}{9.0in}
\setlength{\textwidth}{6in}
\setlength{\parskip}{4pt}

\address{Department of Mathematics, Colorado State University}
\email{renzo@math.colostate.edu}

\address{School of Mathematics and Statistics, The University of Sheffield}
\email{paul.johnson@sheffield.ac.uk}

\address{Eberhard Karls Universit\"at T\"ubingen, Fachbereich Mathematik, Institut f\"ur Geometrie}
\email{hannah@math.uni-tuebingen.de}

\address{Department of Mathematics, Yale University}
\email{dhruv.ranganathan@yale.edu}

\usepackage[backref]{hyperref}
\hypersetup{
  colorlinks   = true,          
  urlcolor     = purple,          
  linkcolor    = blue,          
  citecolor   = purple             
}

\newtheorem{theorem}{Theorem}[subsection]

\newtheorem{lemma}[theorem]{Lemma}
\newtheorem{proposition}[theorem]{Proposition}
\newtheorem{definition}[theorem]{Definition}

\newtheorem{fact}{Fact}

\newtheorem{quasi-theorem}[theorem]{Quasi-Theorem}
\newtheorem{algorithm}[theorem]{Algorithm}

\newtheorem{rem1}[theorem]{Remark}
\newenvironment{remark}{\begin{rem1}\em}{\end{rem1}}

\newtheorem{ex1}[theorem]{Example}
\newenvironment{example}{\begin{ex1}\em}{\end{ex1}}

\newtheorem{not1}[theorem]{Notation}

           
\newcommand{\CC} {{\mathbb C}}          
            
\newcommand{\NN} {{\mathbb N}}		
\newcommand{\PP}{\mathbb{P}}         
\newcommand{\QQ} {{\mathbb Q}}		
\newcommand{\RR} {{\mathbb R}}		
\newcommand{\ZZ} {{\mathbb Z}}


\newcommand{\trop}{t\!r\!o\!p}

\newcommand{\mult}{m\!u\!l\!t}

\DeclareMathOperator{\Aut}{Aut}

\DeclareMathOperator{\val}{val}




\newcommand{\cal}{\mathcal}

\def\cZ{{\cal Z}}











\newcommand{\cutjoin}{\mathcal{F}_2}

\title{A graphical interface for the Gromov-Witten theory of curves}
\author{Renzo Cavalieri, Paul Johnson, Hannah Markwig, and Dhruv Ranganathan}
\date{\today}

\begin{document}

\pagestyle{plain}

\begin{abstract}
We explore the explicit relationship between the descendant Gromov--Witten theory of target curves, operators on Fock spaces, and tropical curve counting. We prove a classical/tropical correspondence theorem for descendant invariants and give an algorithm that establishes a tropical Gromov--Witten/Hurwitz equivalence. Tropical curve counting is related to an algebra of operators on the Fock space by means of bosonification. In this manner, tropical geometry provides a convenient ``graphical user interface'' for Okounkov and Pandharipande's celebrated GW/H correspondence. An important goal of this paper is to spell out the connections between these various perspectives for target dimension $1$, as a first step in studying the analogous relationship between logarithmic descendant theory, tropical curve counting, and Fock space formalisms in higher dimensions. 
\end{abstract}

\maketitle

\section{Introduction} The first goal of this article is to fill in the following square of correspondences:
\begin{equation}
\label{square}
\xymatrix{\mbox{\begin{tabular}{c}
Stationary descendant\\
Gromov-Witten invariants\\
of curves.
\end{tabular}} \ar@{-}[rr]^{\mbox{\cite{OP06}}} \ar@{--}[d]_{\textnormal{Thm.}\ \ref{thm-corres}}  & &  \mbox{Hurwitz numbers} \ar@{-}[d]^{\mbox{\cite{BBM10}}}\\
\mbox{\begin{tabular}{c}
Tropical stationary descendant\\
Gromov-Witten invariants\\
of curves.
\end{tabular}} \ar@{--}[rr]^{\textnormal{Alg.}\ \ref{tgwh} }  & & \mbox{Tropical Hurwitz numbers} 
}
\end{equation}
Hurwitz theory and Gromov-Witten (GW) theory are two enumerative geometric theories that count maps between Riemann Surfaces with specified discrete data. In~\cite{OP06}, Okounkov and Pandharipande develop the GW/H correspondence, a substitution rule that relates Hurwitz numbers and stationary descendant Gromov-Witten invariants of $\PP^1$. Tropical Hurwitz numbers are realized as weighted sums over appropriately decorated graphs:  they were first introduced in particular cases in \cite{CJM1,CJM2}; the general definition and correspondence theorem are due to Bertrand, Brugall\'e{} and Mikhalkin in \cite{BBM10}. 

In Definition \ref{tropgwi} we introduce tropical stationary descendant GW invariants of $\PP^1$ as weighted sums over decorated graphs, which realize tropical covers of the tropical model of the (twice punctured) projective line given by the real line. The key features are that an insertion of a stationary descendant class $\tau_k(pt)$ determines a relationship between the genus and the valence of a vertex of the contributing tropical covers. The multiplicities of vertices are relative GW invariants of $\PP^1$, relative to two points with a single descendant insertion. 
This definition extends naturally to an arbitrary target tropical curve (Definition \ref{tropgwigentc}), with the modification that certain vertices of the tropical covers have multiplicities given by local Hurwitz numbers.

With these definitions in place, we give direct proofs for the correspondences illustrated by the dotted lines above. In Theorem \ref{thm-corres} we establish that descendant GW invariants coincide with their tropical counterparts;  the tropical GW/H correspondence is proved by Algorithm \ref{tgwh}, obtained by globalizing a simple local surgery on graphs. In both cases, the proof is a simple consequence of the degeneration formula (Theorem  \ref{degfor}), which roughly speaking means that we can ``break" the curves we are interested in counting into nodal curves, where each irreducible component becomes simpler; then tropical covers keep track of how these components are assembled. 

The fact that tropical geometry is an effective tool to organize the combinatorics of degenerations is not just a lucky coincidence. 
 In the present context, the relevant degenerations are relative stable maps to target curves. Given a one parameter proper, semistable degeneration of a curve, the dual graph of the special fiber can be endowed with a natural metric. Degenerations of relative stable maps produce piecewise linear maps between these metric graphs, which are tropical covers. The expansion factors along edges of these maps contribute multiplicities to the tropical count in a natural way. The multiplicity reflects the number of algebro-geometric objects tropicalizing to a fixed tropical object. Note that in other instances of correspondence theorems in tropical geometry, such as double Hurwitz numbers, toric Severi degrees, and rational curve counts in toric varieties, the multiplicity is purely combinatorial, see~\cite{CJM1,Mi03,NS06,R15b,Tyo09}. In geometrically richer situations, it is necessary to use certain enumerative invariants as ``local input data''. This is the case for the general version of the \emph{Hurwitz problem} counting covers of a fixed curve of fixed genus and degree and prescribed ramification data over fixed branch points~\cite{BBM10,CMR14}, and for our tropical descendant GW invariants.

As stated earlier, the GW/H correspondence of \cite{OP06} is expressed as a substitution rule: each insertion of the form $\tau_k (pt)$ for a GW invariant becomes a specific linear combination of ramification conditions for the Hurwitz problem. Thinking of Hurwitz numbers as multilinear functions on the space of ramification conditions then allows us to express a given descendant GW invariant in terms of a (rather large) linear combination of Hurwitz numbers. The degeneration formula alone shows that there should be a substitution rule between GW invariants and Hurwitz numbers. In order to explicitly describe the substitution rule, relative descendant GW invariants are expressed as matrix elements on fermionic Fock space. Here we show that relative descendant GW invariants are matrix elements on bosonic Fock space: in  oversimplifying terms, any descendant insertion $\tau_k(pt)$ corresponds to a linear operator  $M_k$ on a countably dimensional vector space with a distinguished basis which indexes relative conditions; then a  descendant GW invariant with specified relative conditions at two points is a structure coefficient (determined by the relative conditions) of a composition (determined by the descendant insertions) of such linear operators. In the mathematical physics community it is well known that matrix elements on bosonic Fock space can be computed by Wick's Theorem, via a Feynmann diagrams expansion. It is probably not too surprising at this point that the Feynmann graphs for descendant GW invariants are precisely the tropical covers that contribute to the corresponding tropical descendant GW invariants.

At this point hopefully we have revealed the second goal of this paper, which is to draw attention to the connections between tropical geometry, degenerations, and  operator formalism on Fock space, and make such connections explicit and approachable to a larger group of mathematicians. Interestingly, while such connections are most transparent when the target dimension is one, it is recent work on the Gromov-Witten theory of surfaces  that inspired us to revisit the target dimension one case: in \cite{CP}, the Severi degrees of $\PP^1\times\PP^1$ and $\PP^2$ are expressed as matrix elements of the exponential of a single operator on bosonic Fock space; in \cite{BG14}, tropical counts and floor diagrams are used to study primary relative GW invariants of certain families of toric surfaces, and again these invariants are interpreted as matrix elements for a particular operator. In forthcoming work, we intend to extend the reach of these result by incorporating descendant insertions, and further exploring the structure of GW invariants on toric surfaces. 

The paper is organized as follows. Section \ref{prelims} is dedicated to the part of Diagram \ref{square} that existed before this paper: we recall classical and tropical Hurwitz numbers, classical stationary descendant GW invariants of curves, genus zero tropical descendant GW invariants and give a brief account of the GW/H correspondence. In Section \ref{sec: desc-corr} we introduce tropical, stationary descendant GW invariants and prove the correspondence Theorem \ref{thm-corres}. We conclude Section \ref{sec: desc-corr} with an explicit evaluation of the genus $0$ and genus $1$ vertex multiplicities in the definition of tropical GW invariants. In Section  \ref{sec-gwh} we describe the algorithm that establishes the tropical GW/H correspondence. In Section \ref{fock} we introduce the bosonic Fock space and express descendant GW invariants of $\PP^1$ as matrix elements, showing in the process the correspondence between tropical cover and Feynmann diagrams.

\subsection{Acknowledgements}
Part of this work was completed during a \textit{Research in Pairs} stay at the Oberwolfach Institute for Mathematics. We thank the institute for its hospitality and for providing excellent working conditions. RC was supported from  NSF grant DMS-1101549, NSF RTG grant 1159964. HM acknowledges support by DFG SFB/TRR 195. DR was supported by NSF grant CAREER DMS-1149054 (PI: Sam Payne). We thank the referee for comments that improved the quality of the exposition.

\section{Preliminaries}\label{prelims}

\subsection{Hurwitz theory} 

In this section we recall the definition of Hurwitz numbers, introduce tropical curves and their covers, and state the classical/tropical Hurwitz correspondence theorem.

\subsubsection{Classical Hurwitz theory}

Classically, Hurwitz numbers count the number of maps that exist between Riemann surfaces with prescribed discrete invariants.

\begin{definition}
Let $Y$ be a smooth Riemann surface of genus $h$. Fix integers $d$ and $g$, and $n$ points $p_1,\ldots,p_n$ in $D$ along with partitions $\mu_1,\ldots,\mu_n$ of $d$. The \emph{Hurwitz number} $H_{g\rightarrow h}(\mu_1,\ldots,\mu_n)$
is the weighted count of covers $f:X\rightarrow Y$, where:
\begin{itemize}
\item $X$ is a connected smooth Riemann surface of genus $g$.
\item The branch points of $f$ are exactly the points $p_1,\ldots,p_n$, and the ramification profile over $p_i$ equals $\mu_i$.
\item Each cover is weighted by $\frac{1}{|\Aut(f)|}$.
\end{itemize}
\end{definition} 

Hurwitz numbers are topological invariants, they do not depend on the choice of the complex structure on $Y$ nor on the positions of the $p_i$. 
Denote the parts of a partition $\mu_i$ by $\mu_{ij}$. The Riemann--Hurwitz formula
\begin{equation} 2-2g=d(2-2h)-\sum_{i=1}^n \sum_{j=1}^{\ell(\mu_i)} (\mu_{ij}-1) .\label{eq-RH}\end{equation}
is a necessary but not sufficient condition for the existence of a cover with the specified ramification data. 

There is essentially a bijection between maps of Riemann surfaces and equivalence classes of monodromy representations, up to postcomposition with inner automorphisms of the symmetric group. This allows to translate the geometric count of maps of Riemann surfaces into a group theoretic count.

A \textit{monodromy representation} of type $(g\to h,d,\mu_1,\ldots, \mu_n)$ is a group homomorphism $\Phi:\pi_1(Y\smallsetminus \{p_1,\ldots,\mu_n\},y_0) \to S_d$ such that, if $\rho_k$ is the homotopy class of a small loop around $p_k$, then the permutation $\Phi(\rho_k)$ has cycle type $\mu_k$. We call the monodromy representation \textit{connected} if the subgroup $Im(\Phi) \leq S_d$ acts transitively on the set $\{1,2,\ldots,d\}$. 

\begin{fact}
$H_{g\rightarrow h}(\mu_1,\ldots,\mu_n)$ is the number of connected monodromy representations of type $(g\to h,d,\mu_1,\ldots, \mu_n)$ divided by $d!=|S_d|$.
\end{fact}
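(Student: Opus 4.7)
The plan is to reduce the weighted count of holomorphic covers to the (unweighted) count of monodromy representations by carefully tracking the labeling of a fiber together with the orbit-stabilizer theorem.

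First I would fix a basepoint $y_0 \in Y\setminus\{p_1,\ldots,p_n\}$ and note that by the Riemann existence theorem a ramified cover $f:X\to Y$ of the prescribed type is the same data as an unramified degree-$d$ cover of the punctured surface $Y\setminus\{p_1,\ldots,p_n\}$ whose monodromy around each puncture has cycle type matching the prescribed ramification profile. Choosing a bijection $\lambda:f^{-1}(y_0)\to\{1,\ldots,d\}$ (a \emph{labeling} of the fiber) then produces, via path lifting, a monodromy homomorphism $\Phi_\lambda:\pi_1(Y\setminus\{p_1,\ldots,p_n\},y_0)\to S_d$. The cycle type of $\Phi_\lambda(\rho_k)$ is exactly the ramification profile over $p_k$, and $X$ is connected if and only if $\image(\Phi_\lambda)$ acts transitively. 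Thus pairs $(f,\lambda)$ with $X$ connected are in bijection with connected monodromy representations of type $(g\to h,d,\mu_1,\ldots,\mu_n)$.

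Next I would compute how many labelings $\lambda$ produce the same isomorphism class of cover. The symmetric group $S_d$ acts on the set of labelings by post-composition, and changing the labeling by $\sigma\in S_d$ conjugates $\Phi_\lambda$ by $\sigma$. Two pairs $(f,\lambda)$ and $(f',\lambda')$ give isomorphic labeled covers precisely when there is an isomorphism $X\to X'$ over $Y$ intertwining the labelings; forgetting the labeling, this shows that the fiber of $(f,\lambda)\mapsto f$ over a fixed $f$ is an $S_d$-torsor modulo the action of $\Aut(f)$. By orbit-stabilizer, the key identity to verify is that the stabilizer in $S_d$ of a monodromy representation $\Phi$ (under conjugation) coincides with $\Aut(f)$: a deck transformation of $f$ permutes the fiber $f^{-1}(y_0)$ and commutes with all monodromy, and conversely any permutation of the fiber commuting with $\Phi$ extends uniquely to a deck transformation by the lifting criterion. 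Consequently, each isomorphism class of cover $f$ corresponds to exactly $d!/|\Aut(f)|$ monodromy representations.

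Summing over isomorphism classes of connected covers yields
\[
\#\{\text{connected monodromy representations of type }(g\to h,d,\mu_1,\ldots,\mu_n)\} \;=\; \sum_{f} \frac{d!}{|\Aut(f)|},
\]
and dividing both sides by $d!$ recovers exactly $H_{g\to h}(\mu_1,\ldots,\mu_n)=\sum_f 1/|\Aut(f)|$. The main obstacle, and the place where one must be slightly careful, is the identification of $\Aut(f)$ with the centralizer of $\image(\Phi)$ in $S_d$: this uses the fact that $X$ is connected (so a deck transformation is determined by its action on a single point of the fiber), together with the unique path-lifting property to extend a compatible permutation of the fiber to a global deck transformation. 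Every other step is a direct application of the covering-space dictionary.
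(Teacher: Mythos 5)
Your proof is correct and follows exactly the classical route the paper itself alludes to (the bijection between covers and monodromy representations up to conjugation, with $\Aut(f)$ identified with the centralizer of the image of $\Phi$ via unique path lifting); the paper states this as a known Fact without writing out the argument, and your orbit--stabilizer bookkeeping is the standard way to fill it in. No gaps.
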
 

Counting all mondoromy representations rather than just the connected ones counts covers from possibly disconnected source curves. Such count is called {\it disconnected Hurwitz number} and denoted $H^\bullet_{g\rightarrow h}(\mu_1,\ldots,\mu_n)$. Disconnected Hurwitz numbers are the natural setting for yet another translation of our enumerative geometric problem. The set of  (not necessarily connected) monodromy representations of type $(g\to h,d,\mu_1,\ldots, \mu_n)$ is in a natural way the solution to a multiplication problem in the class algebra $\cZ\CC[S_d]$ of the symmetric group.

For $\mu$ a partition of $d$, let $C_{\mu}$ denote the formal sum of all permutations of $S_d$ of cycle type $\mu$. The collection of $\{C_\mu\}_{\mu\vdash d}$ is a basis for the class algebra of the symmetric group, called the {\it conjugacy class} basis. For a fixed positive integer $d$, define
\begin{equation}
\mathfrak{K}:= \sum_{\mu \vdash d} |\mathfrak(\mu)| C_\mu^2 \in \cZ\CC[S_d],
\end{equation}
where $|\mathfrak(\mu)|$ is the size of the centralizer of any permutation in the conjugacy class indexed by $\mu$. Note that $\cZ\CC[S_d]$ is the class algebra of the symmetric group, i.e. the complex center of the group ring of $S_d$.

\begin{fact}
The number $d! H^\bullet_{g\rightarrow h}(\mu_1,\ldots,\mu_n)$  is the coefficient of $C_e$ in the product $\mathfrak{K}^h C_{\mu_1}\ldots C_{\mu_n}$, expanded in the conjugacy class basis of $\cZ\CC[S_d]$.
\end{fact}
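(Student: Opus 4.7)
The plan is to unpack both sides as counts of tuples in $S_d$ and identify them through a short calculation in the group algebra $\CC[S_d]$.

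First I would apply the preceding fact to rewrite $d!\,H^\bullet_{g\to h}(\mu_1,\ldots,\mu_n)$ as the number of monodromy representations $\Phi \colon \pi_1(Y\smallsetminus\{p_1,\ldots,p_n\},y_0) \to S_d$ sending $\rho_k$ into the conjugacy class $C_{\mu_k}$. Using the standard presentation
\[
\pi_1(Y\smallsetminus\{p_1,\ldots,p_n\}) = \langle\, \alpha_1,\beta_1,\ldots,\alpha_h,\beta_h,\rho_1,\ldots,\rho_n \;\mid\; \textstyle\prod_{i=1}^h [\alpha_i,\beta_i]\cdot\prod_{k=1}^n \rho_k = 1\,\rangle,
\]
specifying $\Phi$ is equivalent to choosing a tuple $(a_1,b_1,\ldots,a_h,b_h,r_1,\ldots,r_n)\in S_d^{2h+n}$ with $r_k\in C_{\mu_k}$ and $\prod_i [a_i,b_i]\cdot\prod_k r_k = e$. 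Expanding the product $\mathcal{K}^h\cdot C_{\mu_1}\cdots C_{\mu_n}$ in $\CC[S_d]$, where $\mathcal{K}:=\sum_{a,b\in S_d}[a,b]$, the number of such tuples is exactly the coefficient of $e$.

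The heart of the argument is then the identity $\mathcal{K}=\mathfrak{K}$ in $\cZ\CC[S_d]$. I would prove this by fixing $g\in S_d$ and using the change of variables $(a,b)\mapsto(\sigma,\tau):=(aba^{-1},\,b^{-1})$: under this map $[a,b]=g$ becomes $\sigma\tau=g$, and since $\sigma$ is conjugate to $b=\tau^{-1}$ and every element of $S_d$ is conjugate to its inverse, $\sigma$ and $\tau$ necessarily share a cycle type $\mu$. Conversely, for any pair $(\sigma,\tau)\in C_\mu\times C_\mu$ with $\sigma\tau=g$, the preimages are parameterized by the elements $a$ satisfying $a\tau^{-1}a^{-1}=\sigma$, a coset of the centralizer of $\tau^{-1}$ of cardinality $|\mathfrak{z}(\mu)|$. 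Summing over $\mu$ yields $\mathcal{K}=\sum_\mu |\mathfrak{z}(\mu)|\,C_\mu^2=\mathfrak{K}$.

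Finally, since the product $\mathfrak{K}^h C_{\mu_1}\cdots C_{\mu_n}$ lies in the class algebra $\cZ\CC[S_d]$ and $C_e=e$, its coefficient of $e$ in the group basis of $\CC[S_d]$ equals its coefficient of $C_e$ in the conjugacy class basis, giving the claim. I expect the main obstacle to be the bookkeeping in the identity $\mathcal{K}=\mathfrak{K}$, specifically verifying the fiber size $|\mathfrak{z}(\mu)|$ of the change of variables; this is a classical result of Frobenius (usually proved via character theory), but the combinatorial argument above avoids characters entirely and fits naturally with the paper's graphical perspective.
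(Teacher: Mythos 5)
Your proof is correct and follows the route the paper intends: the Fact is stated without proof (deferring to the references) immediately after the translation of disconnected Hurwitz numbers into counts of monodromy representations, and your argument simply carries out that translation using the standard presentation of the punctured genus-$h$ surface group, so that the tuple count becomes the coefficient of $e$ in $\bigl(\sum_{a,b}[a,b]\bigr)^h C_{\mu_1}\cdots C_{\mu_n}$. Your bijective verification of the Frobenius identity $\sum_{a,b\in S_d}[a,b]=\sum_{\mu}|\mathfrak{z}(\mu)|\,C_\mu^2=\mathfrak{K}$, with the fiber of the change of variables correctly identified as a coset of the centralizer of $\tau^{-1}$ of size $|\mathfrak{z}(\mu)|$, supplies exactly the missing ingredient and needs no character theory.
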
 

The class algebra of the symmetric group is semisimple. A natural semisimple basis is indexed by irreducible representations and the coefficients of the change of basis from the conjugacy class basis are the characters of $S_d$. This allows one to write closed formulas for Hurwitz numbers in terms of character theory of the symmetric group. In this text we will not work closely with such formulas, but the connection with character theory of the symmetric group is at the heart of the natural interplay between Hurwitz theory and the algebra of operators on fermionic Fock space. See Section~\ref{fock}.

 A very basic reference for Hurwitz numbers from an enumerative geometric point of view is \cite{renzosbook}.

\subsubsection{Tropical Hurwitz theory}
We now introduce basic concepts in tropical geometry that we need to define tropical Hurwitz numbers.

An \emph{abstract tropical curve} is a connected metric graph $\Gamma$, such that edges leading to leaves (called \emph{ends}) have infinite length, together with a genus function $g:\Gamma\rightarrow \ZZ_{\geq 0}$ with finite support. Locally around a point $p$, $\Gamma$ is homeomorphic to a star with $r$ halfrays. 
The number $r$ is called the \emph{valence} of the point $p$ and denoted by $\val(p)$. We identify the vertex set of $\Gamma$ as the points where the genus function is nonzero, together with points of valence different from $2$. The vertices of valence greater than $1$ are called  \textit{inner vertices}. Besides \emph{edges}, we introduce the notion of \emph{flags} of $\Gamma$. A flag is a pair $(V,e)$ of a vertex $V$ and an edge $e$ incident to it ($V\in \partial e$). Edges that are not ends are required to have finite length and are referred to as \emph{bounded} or \textit{internal} edges.

A \emph{marked tropical curve} is a tropical curve whose leaves are (partially) labeled. An isomorphism of a tropical curve is an isometry respecting the leaf markings and the genus function. The \emph{genus} of a tropical curve $\Gamma$ is given 
\[
g(\Gamma) = h_1(\Gamma)+\sum_{p\in \Gamma} g(p)
\]
A curve of genus $0$ is called \emph{rational} and a curve satisfying $g(v)=0$ for all $v$ is called \emph{explicit}. The \emph{combinatorial type} is the equivalence class of tropical curves obtained by identifying any two tropical curves which differ only by edge lengths.

\begin{definition}
\label{def:tropcov}
A \emph{tropical cover} $\pi:\Gamma_1\rightarrow \Gamma_2$ is a surjective harmonic map of metric graphs in the sense of~\cite[Section 2]{ABBR1}. The map $\pi$ is piecewise integer affine linear, the slope of $\pi$ on a flag or edge $e$ is a positive integer called the \emph{expansion factor} $\omega(e)\in \NN_{> 0}$. 
\end{definition}

For a point $v\in \Gamma_1$, the \emph{local degree of $\pi$ at $v$} is defined as follows. Choose a flag $f'$ adjacent to $\pi(v)$, and add the expansion factors of all flags $f$ adjacent to $v$ that map to $f'$:
\begin{equation}
d_v=\sum_{f\mapsto f'} \omega(f).
\end{equation} 
We define the \textit{harmonicity} or \textit{balancing condition} to be the fact that for each point $v\in \Gamma_1$, the local degree at $v$ is well defined (i.e. independent of the choice of $f'$).

A tropical cover is called a \emph{tropical Hurwitz cover} if it satisfies the local Riemann-Hurwitz condition at each point \cite{BBM10, Cap12, CMR14} stating that when $v\mapsto v'$ with local degree $d$ 
\begin{equation}
2-2g(v)= d(2-2g(v'))-\sum (\omega(e)-1),
\end{equation}
 where the sum goes over all flags $e$ adjacent to $v$. 

The \emph{degree} of a tropical cover is the sum over all local degrees of preimages of a point $a$, $d=\sum_{p\mapsto a} d_p$. By the balancing condition, this definition does not depend on the choice of $a\in \Gamma_2$. For an end $e\in \Gamma_2$ let $\mu_e$ be the partition of expansion factors of the ends of $\Gamma_1$ mapping onto $e$. We call $\mu_e$ the \emph{ramification profile} above $e$.

For a vertex $v\in \Gamma_1$, define a local Hurwitz number 
\begin{equation}\label{lochu}H(v):=H_{g\rightarrow h}(\mu_1,\ldots,\mu_n)\cdot \prod_{i=1}^n|\Aut(\mu_i)|,\end{equation} where $g$ is the genus at $v$, $h$ is the genus at the image of $v$, and the $\mu_i$ are defined as the partitions of expansion factors of inverse images for each flag incident to  the image of $v$. The group $\Aut(\mu_i)$ is defined as follows. Write the partition as $\mu_i = \alpha_1^{k_1}\alpha_2^{k_2}\cdots\alpha_r^{k_r}$, with the $\alpha_j$ distinct integers, indicating that $\alpha_j$ appears $k_j$ times. Then, 
\[
\Aut(\mu_i) := \prod_{j=1}^r S_{k_j},
\]
and thus $|\Aut(\mu_i)| = \prod_j k_j!$.

\begin{definition}\label{def-tropHurwitz}
Let $\Gamma_2$ be an explicit tropical curve of genus $h$ with $n$ leaves. Fix integers $d$ and $g$, and $r$ partitions $\mu_1,\ldots,\mu_n$ of $d$ satisfying Equation (\ref{eq-RH}).
The \emph{tropical Hurwitz number} $H_{g\rightarrow h}^{\trop}(\mu_1,\ldots,\mu_n)$
is the weighted count of tropical Hurwitz covers $\pi:\Gamma_1\rightarrow \Gamma_2$, where
\begin{itemize}
\item $\Gamma_1$ is a tropical curve of genus $g$.
\item The ramification profiles over the leaf $i$ of $\Gamma_2$ equals $\mu_i$.
\end{itemize}
Each  cover is weighted by \begin{equation}\mult_{H}(\pi):=\frac{1}{|\Aut(\pi)|}\cdot \prod_e\omega(e)\cdot \prod_v H(v).\label{eq-multHurwitz}\end{equation} Here, the first product goes over all edges $e$ of $\Gamma_1$ and $\omega(e)$ denotes the expansion factor. The second product goes over all vertices of $\Gamma_1$ and $H(v)$ denotes their local Hurwitz number.
\end{definition} 

That the above definition depends only on the genus of $\Gamma_2$ and not the precise graph can be seen as a consequence of the correspondence theorem. 

\begin{theorem}[Correspondence theorem for Hurwitz numbers]
Algebraic Hurwitz numbers equal their tropical counterparts, i.e.\ we have $$ H_{g\rightarrow h}(\mu_1,\ldots,\mu_n)= H_{g\rightarrow h}^{\trop}(\mu_1,\ldots,\mu_n).$$

\end{theorem}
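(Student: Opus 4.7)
The plan is to degenerate the smooth target $Y$ onto a nodal Riemann surface $Y_0$ whose dual graph realizes the explicit tropical target $\Gamma_2$, and then to apply the Hurwitz degeneration formula; the combinatorial types of the resulting source degenerations will be precisely the tropical covers $\pi:\Gamma_1\to\Gamma_2$, and their contributions will reproduce the tropical multiplicity~\eqref{eq-multHurwitz}.

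First, I would fix an explicit $\Gamma_2$ of genus $h$ with $n$ ends and choose a one-parameter semistable degeneration of $Y$ whose central fiber $Y_0$ is nodal with dual graph $\Gamma_2$: each vertex $v'$ of $\Gamma_2$ corresponds to a smooth component of genus $g(v')$, each internal edge to a node, and each end to a marked point carrying the prescribed ramification profile $\mu_i$. Equivalently, at the group-theoretic level, this is a pair-of-pants decomposition of $Y$ that presents $\pi_1(Y\smallsetminus\{p_1,\ldots,p_n\})$ as an iterated amalgamation of local fundamental groups along the waists dual to the internal edges of $\Gamma_2$.

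Second, I would invoke the degeneration formula, which follows from cutting a monodromy representation across each waist: for every internal edge $e$ one chooses a cycle type $\sigma_e\vdash d$, packaging the expansion factors of the edges of $\Gamma_1$ that lie over $e$, after which the cover decomposes into local covers at each vertex of $\Gamma_1$, glued by the requirement that cycle types match across every node. Summing over all combinatorial types of $\pi:\Gamma_1\to\Gamma_2$ compatible with the prescribed end ramification, a Burnside/class-algebra calculation on each local factor yields
\begin{equation*}
H_{g\to h}(\mu_1,\ldots,\mu_n) \;=\; \sum_{\pi:\Gamma_1\to\Gamma_2} \frac{1}{|\Aut(\pi)|}\,\prod_{e}\omega(e)\,\prod_{v}H(v),
\end{equation*}
which is precisely the tropical count of Definition~\ref{def-tropHurwitz}.

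The main obstacle is the bookkeeping of symmetries: the $|\Aut(\mu_i)|$ factors built into $H(v)$ via~\eqref{lochu}, the edge factors $\omega(e)$, and the global $|\Aut(\pi)|$ must combine correctly, because at each node the preimages on the two sides can be relabelled in ways that may or may not extend to automorphisms of the total cover. I would verify the matching first on a single pair of pants, where the statement reduces to the class-algebra identity $|C_\sigma|\cdot|Z(\sigma)|=d!$, and then proceed by induction on the number of internal edges of $\Gamma_2$: each amalgamation across a waist with common cycle type $\sigma=\alpha_1^{k_1}\cdots\alpha_r^{k_r}$ introduces a factor $|Z(\sigma)|=\bigl(\prod_j k_j!\bigr)\cdot\bigl(\prod_j\alpha_j^{k_j}\bigr)$ that is absorbed exactly into the $|\Aut(\mu_i)|$ terms at the adjacent vertices and the $\prod_e\omega(e)$ product over the edges of $\Gamma_1$ lying above the waist. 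Independence of the answer from the precise choice of $\Gamma_2$ (as opposed to its genus) is then a consequence of the theorem itself, since the left-hand side is a topological invariant of the target.
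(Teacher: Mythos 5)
Your proposal is correct and follows essentially the standard route: the paper itself does not prove this statement but defers to \cite{BBM10,CMR14}, and the argument there (and the one the paper uses for its descendant analogue, Theorem \ref{thm-corres}) is exactly your strategy of degenerating the target to a nodal curve with dual graph $\Gamma_2$, cutting monodromy representations across the waists, and matching the gluing factor $|Z(\sigma)|=\prod_j k_j!\,\alpha_j^{k_j}$ against the $|\Aut(\mu_i)|$ terms in \eqref{lochu} and the edge weights in \eqref{eq-multHurwitz}. Your closing observation that independence of the particular choice of $\Gamma_2$ follows a posteriori from the correspondence is also precisely the remark the paper makes after Definition \ref{def-tropHurwitz}.
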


We refer the reader to~\cite{BBM10,CMR14} for a proof.

If we pick a target having only $3$-valent inner vertices the local Hurwitz numbers we need to take into account for the computation of the multiplicity of a tropical Hurwitz covers are all triple Hurwitz numbers.
In this sense, we can view triple Hurwitz numbers as local input data we make use of to compute any Hurwitz number with the tropical approach.

\subsection{Gromov-Witten theory of \texorpdfstring{$\PP^1$}{P1}}

\subsubsection{Algebraic Gromov-Witten theory of \texorpdfstring{$\PP^1$}{P1}}

Gromov- Witten invariants are virtually enumerative intersection numbers on moduli spaces of stable maps.

A  {\it stable map} of degree $d$ from a curve of genus $h$  to $\PP^1$  is a function $f: X \to \PP^1$, where  $X$ is a connected projective curve with at worst nodal singularities, such that $f_\ast([X]) = d[\PP^1]$ and $f$ has a finite group of automorphism. 

One may endow $X$ with the extra structure of a set of markings, i.e. $x_1, \ldots, x_n$  distinct nonsingular points of $X$. In concrete terms, the condition on a finite group of automorphisms requires at least three special points, i.e. marks or nodes, on any rational component contracted by $f$.

The moduli space of stable maps, denoted  $\overline{\mathcal{M}}_{g,n} (\PP^1,d)$ is a proper Deligne-Mumford stack of virtual dimension $2g-2+2d+n$. When  $g>0$ this space typically contain many components that exceed the virtual dimension. For example, $\overline{\mathcal{M}}_{g} (\PP^1,1)$ has virtual dimension $2g$, but it has a set of components parameterized by partitions of $g$, ranging from dimension $3g-1$  to dimension $2g$.  The {\it virtual fundamental class} $[\overline{\mathcal{M}}_{g,n} (\PP^1,d)]^{vir}$ is a Chow class
of pure dimension $2g-2+2d+n$ that shares many formal properties with the fundamental class of a smooth manifold \cite{Beh97,BF97}. 

To define enumerative invariants that have similar flavor to Hurwitz numbers, we must define some zero dimensional cycles on the moduli space of stable maps by intersecting cycles which have a similar meaning to fixing branch points and prescribing ramification conditions. This is achieved, by  using {\it evaluation morphisms} and {\it descendant insertions}.

The $i^{\textnormal{th}}$  evaluation morphism is the map $ev_i: \overline{\mathcal{M}}_{g,n} (\PP^1,d) \to \PP^1$ that sends a point $[X, x_1, \ldots, x_n, f]$ to  $f(x_i)
\in \PP^1$. 

The $i^{\textnormal{th}}$ cotangent line bundle $\mathbb{L}_i \to \overline{\mathcal{M}}_{g,n} (\PP^1,d)$ is defined by a canonical identification of its fiber over a moduli point $(X, x_1, \ldots, x_n, f)$
with the cotangent space $T^\ast_{x_i}(X)$.  The first Chern class of the cotangent line bundle is called a {\it psi class} ($\psi_i= c_1(\mathbb{L}_i)$), and any monomial in psi classes is called a {\it descendant class}.

 \begin{definition}
 Fix $g,n,d$ and let $k_1, \ldots, k_n$ be non-negative integers with $k_1+\ldots+k_n = 2g+2d-2$. The {\it stationary Gromov-Witten invariant}  $\langle \tau_{k_1}(pt) \ldots \tau_{k_n}(pt) \rangle_{g,n}^{\PP^1,d}$ is  defined by:
 \begin{equation}
 \langle \tau_{k_1}(pt) \ldots \tau_{k_n}(pt) \rangle_{g,n}^{\PP^1,d} = \int_{[\overline{\mathcal{M}}_{g,n} (\PP^1,d)]^{vir}} \prod_{i=1}^n ev_i^\ast(pt) \psi_i^{k_i},
 \end{equation}
 where $pt$ denotes the class of a point in $\PP^1$.
 \end{definition}

As before, one can allow source curves to be disconnected, and introduce {\it disconnected Gromov-Witten invariants}. We will add the superscript $\bullet$ anytime we wish to refer to the disconnected theory.

We conclude this section by introducing some invariants that are in a sense a hybrid between Hurwitz numbers and Gromov-Witten invariants. They are constructed using moduli spaces of {\it relative stable maps} $\overline{\mathcal{M}}_{g,n} (\PP^1, \mu_1,\ldots,\mu_n,d)$, where part of the data specified is the ramification profile $\mu_1, \ldots, \mu_n$  for the map over $r$ labeled points on the base curve. For such a moduli space to be proper, the target $\PP^1$  may sprout rational tails. A detailed discussion of the boundary of spaces of relative stable maps to $\PP^1$ is not necessary for our current scope, but the interested reader may find it in \cite{Vak08}. When we refer to relative Gromov-Witten invariants with at most two relative insertions, we will use operator notation; if $\mu_1, \mu_2$ are two partitions of $d$ denoting ramification profiles over $0$ and $\infty \in \PP^1$:
\begin{equation}
 \langle \mu_1| \tau_{k_1}(pt) \ldots \tau_{k_n}(pt)|\mu_2 \rangle_{g, n}^{\PP^1,d} = \int_{[\overline{\mathcal{M}}_{g,n} (\PP^1, \mu_1,\mu_2,d)]^{vir}} \prod_{i=1}^n ev_i^\ast(pt) \psi_i^{k_i}
\end{equation}  

\subsubsection{Tropical Gromov-Witten theory of $\PP^1$}

Ultimately, one wishes to define {\it tropical Gromov-Witten invariants} as intersection numbers on moduli spaces of stable maps from tropical curves to tropical $\PP^1$. 
The difference between  a tropical (relative) stable map  and  a tropical cover is essentially only that one may allow edges of the source graph to be contracted to vertices of the target graph.

\begin{definition}
\label{def:troprsm}
Let $\Gamma_2$ be a tropical curve with $r$ leaves, and $\mu_1, \ldots, \mu_n$ partitions of the positive integer $d$.
A \emph{tropical relative stable map}, with relative condition prescribed by the  partitions $\mu_i$, is  a map $\pi:\Gamma_1\rightarrow \Gamma_2$ which is a surjective harmonic map of metric graphs; the map $\pi$ is piecewise integer affine linear, with nonnegative integer slope on each edge: $\omega(e)\in \NN_{\geq 0}$. Further,  the inverse image of the $i^{\textnormal{th}}$ leaf of $\Gamma_2$ consists of $\ell(\mu_i)$ leaves of $\Gamma_1$  weighted by the parts of $\mu_i$.
\end{definition}

Parameter spaces for relative stable maps with fixed discrete invariants are in a natural way generalized cone complexes. Just like algebraic moduli spaces of stable maps, when the genus of the source graph is positive, the maximal dimensional cones have different dimensions, which exceed  the dimension of the cones parameterizing honest covers. One wishes to cut out the right dimensional components from these excess dimensional cones, that is, develop a ``tropical virtual fundamental class". The latter has not yet been established, and so at this point tropical Gromov-Witten invariants may not be constructed as degrees of $0$-dimensional intersection cycles. However, the moduli space of maps from genus $0$ curves to $\PP^1$ relative to two points is smooth. The natural boundary divisor, parametrizing maps from curves with at least one node, has simple normal crossings. In this case, the cone complex parametrizing tropical maps is the cone over the dual complex of the boundary. Furthermore, it has the structure of a fan, which allows one to define psi classes intrinsically in tropical geometry. These classes are the tropicalization of generic representatives of the algebraic classes. The moduli space of relative stable maps embeds naturally in a toric variety, and techniques from toric intersection theory allow one to then ``see'' why tropical computations are carrying out intersection theory on the algebraic moduli space, to calculate degrees of the intersections of cycles of the form $ev_i^{-1}(pt)\psi_i^{k_i}$. The details may be found in~\cite{CMR14b}. The main consequence is the following. 

\begin{theorem}[Correspondence theorem for rational descendant invariants]
\label{thm:descendant-correspondence}
We have an equality of classical and tropical genus $0$ relative descendant Gromov--Witten invariants of $\PP^1$:
\[
\langle \tau_{k_1}(pt),\ldots, \tau_{k_n}(pt)\rangle^{\PP^1,d}_{0,n}=
\langle \tau_{k_1}(pt),\ldots, \tau_{k_n}(pt)\rangle_{0,n}^{\PP^1,d,\trop}.
\]
\end{theorem}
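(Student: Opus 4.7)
The plan is to realize both sides of the equality as intersection numbers on the same moduli-theoretic object by running algebraic intersections through a tropicalization procedure, as indicated by the discussion preceding the theorem. First I would work on the moduli space $\overline{\mathcal{M}}_{0,n}(\PP^1,\mu_1,\mu_2,d)$ of rational relative stable maps (with $\mu_1,\mu_2$ the trivial ramification profile $(1,\ldots,1)$ when there are no relative insertions, or the prescribed profiles otherwise). In genus $0$ this space is smooth, the virtual class is the honest fundamental class, and the boundary stratification is simple normal crossings.

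Next I would identify the tropical moduli space of rational relative stable maps, as in Definition~\ref{def:troprsm}, with the cone complex obtained from the dual complex of this boundary divisor. Because the boundary is SNC, this dual complex has the structure of a fan, so it embeds as a subfan inside the fan of a suitable toric variety into which the moduli space itself embeds. Under this embedding, the evaluation maps $ev_i$ and the cotangent line bundles $\mathbb{L}_i$ pull back from toric data, so the tropical psi classes and the tropical evaluation conditions can be defined intrinsically on the tropical moduli space as (powers of) piecewise linear functions and codimension-one subcomplexes respectively.

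With this setup, the computation of
\[
\int_{[\overline{\mathcal{M}}_{0,n}(\PP^1,d)]^{\mathrm{vir}}} \prod_i ev_i^\ast(pt)\,\psi_i^{k_i}
\]
becomes a toric intersection on the ambient toric variety, and by the standard dictionary between toric intersections and fan-theoretic computations (Minkowski weights, stable intersection) this numerical intersection is computed by a weighted count of isolated points in the common refinement of the tropical cycles representing $ev_i^{-1}(pt)$ and $\psi_i^{k_i}$. These isolated points are precisely the rational tropical covers contributing to $\langle\tau_{k_1}(pt)\cdots\tau_{k_n}(pt)\rangle_{0,n}^{\PP^1,d,\trop}$, and the multiplicities coming from the toric intersection match the product of expansion factors and local vertex contributions that define the tropical multiplicity in Definition~\ref{tropgwi}.

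The main obstacle is Step~3, namely verifying that the intrinsic tropical psi class agrees with the tropicalization of $\psi_i$ at the level of intersection numbers. One expects to argue this either by moving $\psi_i$ to a generic representative supported on boundary strata (using that $\psi_i$ restricted to the relative locus admits explicit boundary expressions via Vakil's description of the boundary of spaces of relative stable maps), or by pulling back from $\overline{M}_{0,n}$ where the equality of algebraic and tropical psi classes is classical. Once this identification is in hand, the correspondence reduces to bookkeeping of multiplicities and combinatorial types of tropical covers, so the essence of the argument is concentrated in the toric/tropical intersection-theoretic comparison of psi classes.
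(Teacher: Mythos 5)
Your proposal follows essentially the same route the paper takes for this theorem: the paper does not prove it in-text but sketches exactly this argument (smoothness of the genus~$0$ space of relative stable maps, SNC boundary, identification of the tropical moduli space with the cone over the dual complex and its fan structure, embedding into a toric variety, and toric intersection theory to compare $ev_i^{-1}(pt)\psi_i^{k_i}$ with its tropical counterpart), deferring all details, including the psi-class comparison you correctly flag as the crux, to the cited reference. Your reconstruction is a faithful elaboration of that same approach.
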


This is in a sense the most satisfying type of correspondence theorem, in the sense that the correspondence of the enumerative geometric information is in fact a consequence of a deeper correspondence in the geometry and intersection theory of the algebraic and combinatorial moduli spaces involved. A similar ``geometrized'' correspondence theorem for Hurwitz numbers is proved in~\cite{CMR14}. In this setting, tropical enumerative computations are related to the calculation of a degree of the appropriate evaluation morphism on analytic domains in a Berkovich analytic space. 

The lack of understanding of the correspondence at the level of moduli spaces does not prevent one from studying correspondence between enumerative geometric quantities, and in fact such numerical correspondences are  evidence that it makes sense to look for a geometric correspondence between the moduli spaces. In Section~\ref{sec: desc-corr}, we bypass the virtual class issue and define stationary descendant invariants of $\PP^1$ as an appropriately weighted sum of graphs with certain characteristics, and then prove a correspondence theorem with the classical invariants.

\subsection{The GW/H correspondence} 

Let $V = \oplus_{k=0}\QQ \tau_k(pt)$ be a countably dimensional  rational vector space with a basis given by the symbols $\tau_k(pt)$, and  $W = \oplus_{d=0}^\infty \oplus_{\mu\vdash d} \QQ \mu$ a vector space with a basis given by all partitions of all nonnegative integers. Here, $\emptyset$ is considered the unique partition of $0$. The collection of all $n$-pointed stationary Gromov-Witten invariants  of $\PP^1$ defines a multilinear function  $GW^\bullet_n: V^{\otimes n}\to \QQ$, defined on the elements of the natural basis of $V^{\otimes n}$ by:
\begin{equation}
\tau_{k_1}(pt) \otimes \ldots \otimes \tau_{k_n}(pt)\mapsto \sum_{d}\langle \tau_{k_1}(pt) \ldots \tau_{k_n}(pt) \rangle_{n}^{\PP^1,d, \bullet} q^d,
\end{equation}
where $q$ is a formal variable keeping track of degree and we omit the genus subscript since the genus of the source curve is  determined by the other discrete invariants.

Similarly, Hurwitz theory for target curve $\PP^1$ with $n$ branch points defines a multi-linear function $H^\bullet_n: W^{\otimes n} \to \QQ$: 
\begin{equation} \label{hurwmlf}
\mu_1 \otimes \ldots \otimes \mu_n \mapsto \sum_d H^\bullet_d(\mu_1, \ldots, \mu_n) q^d,
\end{equation}
where we omit the genera subscripts since the genus of the base curve is fixed and the genus of the source curve is determined via the Riemann-Hurwitz formula. An important detail to pay attention to is that  the partitions $\mu_i$ are not necessarily partitions of $d$, so we adopt the following conventions in order for \eqref{hurwmlf} to make sense:
\begin{itemize}
	\item $H^\bullet_0(\emptyset, \ldots, \emptyset):=1$.
	\item if some of the $\mu_i$ are partitions of an integer strictly greater than $d$, then we set $H^\bullet_d(\mu_1, \ldots, \mu_n):=0$.
	\item if $\mu$ is a partition of $d-k$, for $k\geq 0$, then we define $\tilde\mu$ to be the partition of $d$ consisting of the parts of $\mu$ plus $k$ parts of size $1$. Moreover let  $j$ denote the number of parts of $\mu$ of size $1$. Then we define:
	\begin{equation} \label{mutilde}
	H^\bullet_d(\mu_1, \ldots, \mu_n):= \prod_{i=1}^n {{j_i +k_i}\choose{k_i}}H^\bullet_d(\tilde\mu_1, \ldots, \tilde\mu_n)
	\end{equation}
\end{itemize}
While definition \eqref{mutilde} may seem unnatural at first, it has an intuitive geometric interpretation: we are counting Hurwitz covers corresponding to the branching data $\tilde\mu_i$'s together with a choice, for each branch point $p_i$, of a divisor of unramified preimages of $p_i$ of size $k_i$.

The GW/H correspondence of \cite{OP06} may now be stated as the existence of a linear transformation $CC: V \to W$ that gives rise, for every $n$, to the commutative diagram:
$$
\xymatrix{V^{\otimes n} \ar[dr]_{GW_n} \ar[rr]^{CC^{\otimes n}} & & \ar[dl]^{H_n} W^{\otimes n}\\
  & \QQ & }
$$

Defining the linear transformation $CC$ in detail requires a fair amount of work and a detour into the connection between character theory of the symmetric group and the theory of shifted symmetric functions. We refer the reader to the elegant treatment in~\cite[Section 0.4]{OP06}) Here we point out some of the features of this transformation.
For every $k\geq 0$, $CC$ is defined by 
\begin{equation}
\tau_k(pt) \mapsto \frac{1}{k!} \overline{(k+1)},
\end{equation}
where $\overline{(k+1)}= (k+1)+ \sum \rho_{k+1,\mu} \mu$ is  called a {\it completed cycle}, and it has the following characteristics:
\begin{itemize}
      \item $\sum \rho_{k+1,\mu} \mu$ is a linear combination of partitions of integers strictly less than $k+1$.
      \item the {\it completion coefficients}  $\rho_{k+1, \mu}$ with $\mu \not= \emptyset$ are non-negative rational numbers.
      \item for every integer $g\geq 0$, the {\it completion coefficient}  $\rho_{2g-1, \emptyset}$ is equal to the Hodge integral $(-1)^g\int_{\overline{M}_{g,1}} \lambda_g \psi^{2g-2}$.
\end{itemize}
If the reader is unfamiliar with Hodge integrals, then the content that should be taken away from the last bullet point is that completion coefficients in degree $0$ have some natural geometric meaning. This is true in fact of all completion coefficients, which are essentially  connected relative $1$-point
Gromov-Witten invariants of $\PP^1$ relative to $0 \in \PP^1$.

\begin{proposition}[{\cite[Proposition 1.6]{OP06}}]
\label{compco}
Write $\mu = \alpha_1^{k_1}\cdots \alpha_r^{k_r}$. The completion coefficients are given by the formula
\begin{equation}
\frac{\rho_{k+1, \mu}}{k!} = \prod_j k_j! \alpha_j^{k_j}\cdot \langle \mu | \tau_k(pt) \rangle^{\PP^1,d}_{n=1},\label{eq-compco}
\end{equation}
where $\mu \vdash d$ and the genus of the source curve is determined by the other discrete invariants.
\end{proposition}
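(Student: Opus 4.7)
The plan is to derive the identity as an immediate consequence of the full GW/H correspondence combined with the elementary observation that two-point Hurwitz numbers on $\PP^1$ form a diagonal pairing on the class algebra $\cZ\CC[S_d]$; this pairing lets one read off individual coefficients in the expansion of the completed cycle.

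I would first apply the GW/H substitution rule $\tau_k(pt)\mapsto \frac{1}{k!}\overline{(k+1)}$ to the single descendant insertion, yielding
\[
\langle \mu \mid \tau_k(pt)\rangle^{\PP^1,d}_{n=1} \;=\; \frac{1}{k!}\, H^{\bullet}_d\bigl(\mu,\overline{(k+1)}\bigr),
\]
and then expand $\overline{(k+1)} = (k+1) + \sum_\nu \rho_{k+1,\nu}\,\nu$ and use multilinearity of the Hurwitz bracket to reduce the problem to evaluating $H^{\bullet}_d(\mu,\nu)$ for partitions $\nu$ of various sizes.

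Next, I would compute these two-point Hurwitz numbers via the Frobenius monodromy description: a degree $d$ cover of $\PP^1$ branched over two points corresponds to a pair $(\sigma,\sigma^{-1})$ in $S_d$, so the two ramification profiles must coincide. This yields the diagonal formula
\[
H^{\bullet}_d(\mu,\lambda) = \frac{\delta_{\mu,\lambda}}{|Z(\mu)|}, \qquad |Z(\mu)|=\prod_j k_j!\,\alpha_j^{k_j},
\]
for $\mu,\lambda \vdash d$. Together with the excerpt's convention for extending smaller partitions by ones (with the corresponding binomial weights), this identifies $H^{\bullet}_d(\mu,\cdot)$ with the linear functional on $\cZ\CC[S_d]$ that extracts, up to the factor $1/|Z(\mu)|$, the coefficient of the conjugacy class $C_\mu$. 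Applied to $\overline{(k+1)}$, this coefficient is $\rho_{k+1,\mu}$, and rearranging
\[
\langle \mu \mid \tau_k(pt)\rangle^{\PP^1,d}_{n=1} \;=\; \frac{\rho_{k+1,\mu}}{k!\,|Z(\mu)|}
\]
gives the claimed identity.

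The main obstacle is the careful matching of conventions: the completed cycle $\overline{(k+1)}$ naturally lives in a stable limit of class algebras and has an expansion indexed by partitions of various integers, whereas the Hurwitz number is evaluated inside a fixed $\cZ\CC[S_d]$. One must verify that the extension-by-ones rule and the binomial weights in the Hurwitz convention are precisely compensated by the standard OP normalization—in which the completion coefficients $\rho_{k+1,\nu}$ are supported on partitions with no parts equal to one—so that the coefficient extracted by the two-point Hurwitz pairing is exactly $\rho_{k+1,\mu}$ rather than a weighted sum over smaller partitions. Once these conventions are aligned, the statement follows from the direct computation above.
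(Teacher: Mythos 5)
Note first that the paper does not actually prove this proposition: it is quoted from \cite[Proposition 1.6]{OP06}, and the surrounding text only sketches the intended mechanism (the degeneration formula applied to a comb of rational curves, plus a dimension count identifying the disconnected one-point invariant on each tooth with a connected one-point invariant together with trivial degree-one components). Your overall route --- reading the one-point case of the GW/H substitution rule backwards through the diagonal two-point Hurwitz pairing $H^\bullet_d(\mu,\lambda)=\delta_{\mu,\lambda}/\mathfrak{z}(\mu)$ --- has the right shape and is close to how the identity is extracted in \cite{OP06}.

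There is, however, a genuine gap at the step where you claim the pairing isolates $\rho_{k+1,\mu}$. You assert that the completion coefficients $\rho_{k+1,\nu}$ are supported on partitions with no parts equal to one, so that after padding only $\nu=\mu$ survives. That is false in the Okounkov--Pandharipande normalization: for instance $\overline{(3)}$ contains the term $(1,1)$ and $\overline{(4)}$ contains $2\,(2,1)$, so partitions with unit parts occur with nonzero coefficients. Consequently $H^\bullet_d\bigl(\mu,\overline{(k+1)}\bigr)$ is not $\rho_{k+1,\mu}/\mathfrak{z}(\mu)$ but a weighted sum of $\rho_{k+1,\nu}$ over all $\nu$ obtained from $\mu$ by deleting parts equal to one, with the binomial weights of \eqref{mutilde}; the functional ``coefficient of $C_\mu$ in $\cZ\CC[S_d]$'' applied to the image of $\overline{(k+1)}$ extracts this whole sum, not the single universal coefficient $\rho_{k+1,\mu}$. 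The missing ingredient is the matching between these extra terms and the connected/disconnected discrepancy on the geometric side: the substitution rule computes the \emph{disconnected} invariant $\langle\mu|\tau_k(pt)\rangle^{\bullet}$, which decomposes as the connected invariant plus contributions where unit parts of $\mu$ are carried by trivial degree-one components, and it is exactly this triangular system that must be inverted to arrive at the stated identity for the connected invariant. Without carrying out that bookkeeping the derivation does not close. (There is also a mild circularity concern in assuming the full GW/H correspondence to prove a statement that \cite{OP06} establish alongside it, but this is acceptable if the correspondence is taken as a black box.)
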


The GW/H correspondence, and the geometric interpretation of the completion coefficients arise as  a consequence of the {\it degeneration formula} \cite{Li01A,Li02}, which  states that a Gromov-Witten invariant for a target curve $X$ can be computed by degenerating $X$ to a nodal curve $X_1\cup_x X_2$ and appropriately combining invariants of $X_1$ and $X_2$ relative to the point $x$.
 More precisely,
 \begin{theorem}
 \label{degfor}
 Let ${\mathscr X}_t$ be a flat family of curves such that the general fiber is a smooth curve $X$ and the central fiber is a nodal curve $X_1\cup_x X_2$, $s_1(t), \ldots, s_{n+m}(t)$ pairwise disjoint sections of ${\mathscr X}_t$ with $s_1(0), \ldots , s_n(0) \in X_1$ and $s_{n+1}(0), \ldots , s_{n+m}(0) \in X_2$. Then
%
 \begin{align}\begin{split}
& \langle \tau_{k_1}(pt),\ldots, \tau_{k_{n+m}}(pt)\rangle^{X, d \bullet}_{n+m} \\= & \sum_{\mu\vdash d} |\xi(\mu)|  \langle \tau_{k_1}(pt),\ldots, \tau_{k_{n}}(pt)\rangle^{X_1, d,\bullet}_{n}  \langle \tau_{k_{n+1}}(pt),\ldots, \tau_{k_{n+m}}(pt)\rangle^{X_2, d, \bullet}_{m}.\end{split}
 \end{align}

 \end{theorem}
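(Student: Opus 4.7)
The plan is to apply Jun Li's theory of expanded degenerations and predeformable relative stable maps \cite{Li01A,Li02}, which sets up the degeneration formula as a splitting theorem for the virtual fundamental class at the central fiber. First, I would replace the family $\mathscr{X}_t \to \Delta$ by its expanded version: the central fiber is allowed to acquire chains of rational bridges at the node $x$. This is necessary so that the total moduli stack $\cM^{\bullet}(\mathscr{X}/\Delta,d)$ of predeformable stable maps is proper and carries a two-term perfect obstruction theory over the base. Deformation invariance of the virtual class for this family then identifies the integral $\langle \tau_{k_1}(pt),\ldots,\tau_{k_{n+m}}(pt)\rangle^{X,d,\bullet}$ computed on a general fiber with the corresponding integral over the virtual class of $\cM^{\bullet}(X_1\cup_x X_2, d)$. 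Because the sections $s_j(t)$ specialize to marked points lying entirely on $X_1$ or entirely on $X_2$, the pulled-back point classes and $\psi$-classes distribute without mixing between the two sides.

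The main step is the splitting of the virtual class at $t=0$. The central-fiber moduli stack decomposes, up to finite \'etale gerbes, as a disjoint union over partitions $\mu\vdash d$ of fibre products
\[
\cM^{\bullet}(X_1,x,\mu)\; \times_{\,\mathrm{ev}_x}\; \cM^{\bullet}(X_2,x,\mu),
\]
where the two factors parametrize relative stable maps with matching ramification profile $\mu$ over $x$. Li's splitting theorem says that the restriction of $[\cM^{\bullet}(X_1\cup_x X_2,d)]^{\mathrm{vir}}$ to each stratum equals the external product of the two relative virtual classes, weighted by a combinatorial factor coming from the node. This factor is precisely $|\xi(\mu)| = \prod_i \mu_i \cdot |\Aut(\mu)|$: the product of the parts records the multiplicity of the predeformability (Kuranishi) condition along each of the $\ell(\mu)$ preimages of the node, while $|\Aut(\mu)|$ accounts for the unordered labeling of those preimages identified on the two sides.

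Putting the pieces together, the descendant integral on the central fiber expands as $\sum_{\mu\vdash d}|\xi(\mu)|$ times the product of two relative descendant invariants on $(X_1,x)$ and $(X_2,x)$ with ramification profile $\mu$ over the node (the relative insertion $|\mu\rangle$ that is implicit in the statement of the theorem). Combined with deformation invariance from the first step, this yields the claimed equality.

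The hard part is the identification of $|\xi(\mu)|$ as the correct combinatorial weight, which rests on the node-smoothing analysis of \cite{Li02}: namely, that the deformation space of a predeformable map at a node whose local expansion factors are $\mu_1,\ldots,\mu_{\ell(\mu)}$ is a $(\prod_i \mu_i)$--fold cover of the deformation space of the nodal target, and that automorphisms of the unordered pairing of branches on the two sides contribute an additional $|\Aut(\mu)|$. Once this local analysis is in hand, the global splitting of the virtual class and hence the degeneration formula follow formally.
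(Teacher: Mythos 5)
The paper does not prove this statement itself --- it is quoted directly from Jun Li's work \cite{Li01A,Li02} (and is the form used in \cite{OP06}) --- and your proposal is a faithful outline of exactly that proof: expanded degenerations, deformation invariance of the virtual class, and the splitting of the central-fiber virtual class over ramification profiles $\mu$ with gluing factor $|\xi(\mu)|=\prod_i\mu_i\cdot|\Aut(\mu)|$, matching the paper's conventions (this is the quantity denoted $\mathfrak{z}(\mu)$ in Section 5). You also correctly note that the relative insertions $|\mu\rangle$ are implicit in the right-hand side of the stated formula, so your argument is in substance the same as the cited source's.
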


Theorem \ref{degfor} can be applied to the case where one imposes relative conditions on the target curves, to compute relative Gromov-Witten invariants. It also generalizes in a natural way to the situation where $X$ degenerates to a nodal curves with more than $2$ components.
The $GW/H$ correspondence is obtained by degenerating $(\PP^1, p_1, \ldots, p_n)$ to a ``comb" of rational curves  with each of the $p_i$ on one of the teeth of the comb. Then the invariants corresponding to the spine of the comb have only relative insertions, and give therefore Hurwitz numbers. For each tooth of the comb, a simple dimension count shows that the disconnected, descendant, one pointed invariant with one relative condition $\tilde\mu \vdash d$ must actually consist of a connected, descendant, one pointed invariant with one relative condition $\mu \vdash d-k$, where the parts of $\tilde\mu$ consists of the parts of $\mu$ plus $k$ parts of size one.

While the degeneration formula explains in a very satisfactory way the structure of the GW/H correspondence, and the meaning of the completion coefficients, the connection to the theory of shifted symmetric function is what allowed Okounkov and Pandharipande to explicitly evaluate the completion coefficients. We review this part of the story, and the connections to matrix elements in the Fock space, in Section \ref{fock}.

\section{Correspondence theorem for tropical descendant GWI}\label{sec: desc-corr}

In this section we define tropical descendant GW invariants as an appropriate weighted  graph sum, and prove a correspondence  theorem that equates them with classical descendant invariants. We first study the case when the target is $\PP^1$, where the notation can be reduced to a minimum. The case of a general target curve is not conceptually more difficult and will be sketched later in the section.

\subsection{Target curve \texorpdfstring{$\PP^1$}{P1}} 

\begin{definition}\label{tropgwi}
Let $g$ be a non-negative integer, $\mu, \nu$ two partitions of a positive integer $d$; let $k_1, \ldots, k_n$ be nonnegative integers such that $\sum k_i = 2g+\ell(\mu)+\ell(\nu)-2$. We define
the {\it tropical descendant GW invariant}
$$\langle \mu| \tau_{k_1}(pt)\ldots\tau_{k_n}(pt)|\nu \rangle ^{\PP^1, d, trop}_{g,n}
= \sum_{\pi:\Gamma\to \PP^1_{\trop}} \frac{1}{| \Aut(\pi)|} \prod_V m_V \prod_e \omega(e)
$$
where
\begin{enumerate}[(A)]
\item The map $\pi:\Gamma\to \PP^1_{\trop}= \RR$ is a connected tropical cover.
\item The unbounded left (resp. right ) pointing ends of $\Gamma$ have expansion factors given by the partition $\mu$ (resp. $\nu$).
\item There is a unique vertex of $\Gamma$ over each one of $n$ fixed points $p_1,\ldots, p_n$ in $\PP^1_{\trop}$.
\item The unique vertex $v_i$ over $p_i$  has genus $g_i$, and valence $k_i+2-2g_i$.  If the star of $v_i$ has right (resp. left) hand side expansion factors given by $\mu_i$ (resp. $\nu_i$), the multiplicity of the vertex $v_i$ is defined to be
\begin{equation}\label{localmv}
m_{v_i} = |Aut(\mu_i)||Aut(\nu_i)|\int_{\overline M_{g_i,1}(\PP^1,\mu_i,\nu_i)} \psi^{k_i} ev^\star(pt).
\end{equation}
\item The product of the expansion factors $\omega(e)$ runs over the set of all bounded edges of $\Gamma$.
\end{enumerate}
\end{definition}

One may define {\it disconnected} tropical descendant invariants simply by removing the requirement that the graph $\Gamma$ be connected. Note however that the vertex multiplicities $m_{v_i}$ remain connected one-pointed descendant invariants.  We now state the correspondence theorem --- note that an analogous statement holds in the disconnected case.

\begin{theorem}\label{thm-corres} For any choice of discrete data, tropical and classical descendant invariants agree:
$$\langle \mu| \tau_{k_1}(pt)\ldots\tau_{k_n}(pt)|\nu \rangle ^{\PP^1,d}_{g,n}
= \langle \mu| \tau_{k_1}(pt)\ldots\tau_{k_n}(pt)|\nu \rangle ^{\PP^1, d, \trop}_{g,n}$$
\end{theorem}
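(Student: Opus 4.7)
The strategy is to proceed by induction on the number of descendant insertions $n$, using the degeneration formula (Theorem~\ref{degfor}) to reduce the global invariant to local one-pointed pieces, and then to reorganise the resulting sum over intermediate partitions as a sum over tropical covers in the sense of Definition~\ref{tropgwi}.

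For the base case $n=1$, a tropical cover contributing to the right-hand side consists of a single vertex $v_1$ over the unique marked point, with left ends weighted by $\mu$, right ends weighted by $\nu$, and no bounded edges. Its automorphism group permutes parallel ends of the same weight, so $|\Aut(\pi)| = |\Aut(\mu)||\Aut(\nu)|$, and the definition~\eqref{localmv} of $m_{v_1}$ causes these factors to cancel exactly, yielding $\langle\mu|\tau_{k_1}(pt)|\nu\rangle^{\PP^1,d}_{g,1}$.

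For the inductive step I would degenerate $\PP^1$ into a nodal union $X_1 \cup_x X_2$, with $X_1$ carrying $p_1,\ldots,p_{n-1}$ and $X_2$ carrying $p_n$. The degeneration formula (applied to the disconnected theory and restricted to the connected part) expresses the classical invariant as
\[
\sum_{\lambda\vdash d} |\xi(\lambda)|\;\langle\mu|\tau_{k_1}\cdots\tau_{k_{n-1}}|\lambda\rangle^{\PP^1,d}\;\langle\lambda|\tau_{k_n}|\nu\rangle^{\PP^1,d},
\]
and the induction hypothesis replaces each relative invariant by its tropical counterpart. On the tropical side, every cover $\pi:\Gamma\to\RR$ is naturally cut into subcovers $\pi_1$ (over $p_1,\ldots,p_{n-1}$) and $\pi_2$ (over $p_n$) by the collection of bounded edges lying above a separating point between $p_{n-1}$ and $p_n$; the weights of these cutting edges form a partition $\lambda\vdash d$. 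The total tropical sum thus reorganises as a sum over $\lambda$ followed by a sum over pairs of covers with matching ends of weight $\lambda$.

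The heart of the argument is the combinatorial identity matching these two sums for each $\lambda$. One verifies that: (i) the vertex multiplicities $m_v$ are local and so distribute without change across the cut; (ii) $\prod_e\omega(e)$ over bounded edges of $\pi$ equals the corresponding products for $\pi_1$ and $\pi_2$ multiplied by the additional factor $\prod\lambda$ coming from the cutting edges, which are ends of each half but bounded in $\pi$; and (iii) the automorphism groups satisfy $|\Aut(\pi)| = |\Aut(\pi_1)|\cdot|\Aut(\pi_2)|/|\Aut(\lambda)|$, since permuting two weight-$\lambda_j$ cutting edges permutes simultaneously a right end of $\pi_1$ and the matching left end of $\pi_2$. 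Combining (i)--(iii), the tropical contribution from all pairs with cutting partition $\lambda$ is $|\Aut(\lambda)|\prod\lambda$ times the product of the two tropical invariants. The main obstacle I expect is reconciling this combinatorial factor with $|\xi(\lambda)|$ in the degeneration formula once the convention on labelled versus unlabelled relative marks, and the $|\Aut|$ factors built into~\eqref{localmv}, are all correctly tracked; everything else is bookkeeping around the iterated application of Theorem~\ref{degfor}.
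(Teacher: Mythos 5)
Your argument is correct and is essentially the paper's own proof: the paper applies the degeneration formula once to a chain of $n$ rational curves and identifies dual graphs of the degenerate covers (after forgetting $2$-valent genus-zero vertices) with the tropical covers of Definition~\ref{tropgwi}, which is exactly your induction unrolled, with the same dimension count forcing the non-marked components to be trivial cyclic covers and the same matching of gluing factors. The one caveat is that your identity (iii) does not hold cover-by-cover --- distinct gluings of a fixed pair $(\pi_1,\pi_2)$ can yield non-isomorphic covers $\pi$ each with its own $\Aut(\pi)$ --- and only the orbit--stabilizer aggregate over all gluings reproduces the factor $|\Aut(\lambda)|\prod\lambda_i=|\xi(\lambda)|$; this, together with running the whole induction in the disconnected theory so that the cut halves are allowed to be disconnected, is precisely the ``standard bookkeeping'' the paper also omits.
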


\begin{proof} This follows from a application of the degeneration formula in Gromov--Witten theory (Theorem \ref{degfor}). We indicate the structure of this argument and omit the standard bookkeeping.

Consider a degeneration of the base curve $\PP^1$ into a chain  $C$ of $n$ rational curves, such that the point $p_i$ is on the $i^{\textnormal{th}}$ component. By the degeneration formula, the classical descendant invariant  $\langle \mu| \tau_{k_1}(pt)\ldots\tau_{k_n}(pt)|\nu \rangle ^{\PP^1,d}_{g,n}$ may be computed in terms of relative stable maps to $C$, as a sum of contributions from all possible ways of imposing relative conditions at each of the nodes of $C$. For each choice that gives a non-zero contribution, one may consider the dual graph $\Gamma$ of a corresponding map to $f:X \to C$.  Give each edge $e$, corresponding to a note of $X$ mapping to a node of $C$, expansion factor equal to the order of ramification of the map at the node.\footnote{More precisely, we should speak about the order of ramification of the normalization of $f$ at either of the shadows of the node of $C$. Such order is well defined by the {\it kissing condition} at nodes over relative points.}  The key point is that  after forgetting two valent genus zero vertices there is a bijection between such dual graphs and the tropical covers in Definition \ref{tropgwi}: by dimension reasons, over each component of $C$, all connected components that do not host the marked point must be rational curves mapping to the base trivially (i.e. ramifying only over the nodes). Also, dimension reasons imply that the discrete invariants for the map restricted to the component containing the marked point must satisfy \eqref{localmv}. The proof is concluded by noting that the degeneration formula multiplicity  for a given type of relative map equals the multiplicity of the corresponding tropical cover.
\end{proof}

The proof of Theorem \ref{thm-corres} is purely combinatorial, and it may fall short of explaining why tropical covers encode so perfectly the  combinatorics of the degeneration formula. 
At this point we do understand that this is not just a lucky coincidence, but a consequence of a deeper correspondence between algebraic relative stable maps and their tropical counterparts. For the interested readers, we expand upon this idea in the following remark.

\begin{remark}
Given general points $p_1,\ldots, p_k\in \RR\subset \PP^1_{\trop}$, one obtains a toric degeneration of $\PP^1$, by using these points as vertices in a polyhedral decomposition of $\RR$ whose recession fan is the fan of $\PP^1$. A construction of Gubler produces the desired toric degeneration~\cite{Gub13} whose special fiber components are in bijection with the points $p_1,\ldots, p_k$. Each tropical cover then determines a (strata parametrizing) relative stable maps to the special fiber of this curve. After passing to a maximal degeneration, the constancy of the virtual fundamental class in flat families~\cite{Li01A} yields the result above, after some careful bookkeeping. An alternative modern perspective on this comes from logarithmic geometry -- rather than degenerating the target $\PP^1$, this information in the degeneration and the maximally degenerate map are carried by the logarithmic structure. See~\cite{AC11,Che10,GS13} for details. Both approaches have had fruitful generalizations to higher dimensional toric varieties. Every tropical curve $\Gamma$ in $\RR^n$ determines a toric degeneration of any toric variety whose one skeleton includes the recession fan of $\Gamma$. After passing to a maximal degeneration, one obtains similar correspondence results between tropical curve counts and algebraic counts, though not descendant invariants. This approach was pioneered by Nishinou and Siebert~\cite{NS06}. The approach of logarithmic Gromov--Witten theory is more flexible, generalizes immediately to higher dimensional target toric varieties, and allows one to study invariants with psi class insertions. See~\cite{Gro15,R15b}.

\end{remark}

\begin{example}\label{ex-d4}
Let us compute $\langle (1)^4 | \tau_{3}(pt), \tau_{3}(pt)| (1)^4 \rangle_{0,2}^{\PP^1,4,\bullet, trop}$.
We need source curves with two vertices, of valence $5$ and genus $0$, or valence $3$ and genus $1$. It is not possible to have a one-valent inner vertex: by the balancing condition the adjacent edge needs to be contracted leading to multiplicity zero.
The combinatorial types of the contributing tropical covers are depicted in Figure \ref{fig-exd4}. We depict genus one vertices as thickened dots. Vertices for which no genus is shown are assumed to be genus zero vertices. Expansion factors on edges which are not shown are assumed to be one. The number below each picture is the multiplicity of the corresponding cover. The automorphisms and expansion factors which contribute to the multiplicity are read off each picture. For the local vertex multiplicities, in Section \ref{subsec-g1mult} we show that  they are $1$ for genus zero vertices, and they are $\frac{1^2+1^2+2^2-1}{24}=\frac{5}{24}$  for the genus $1$ vertices.

\begin{figure}
\includegraphics{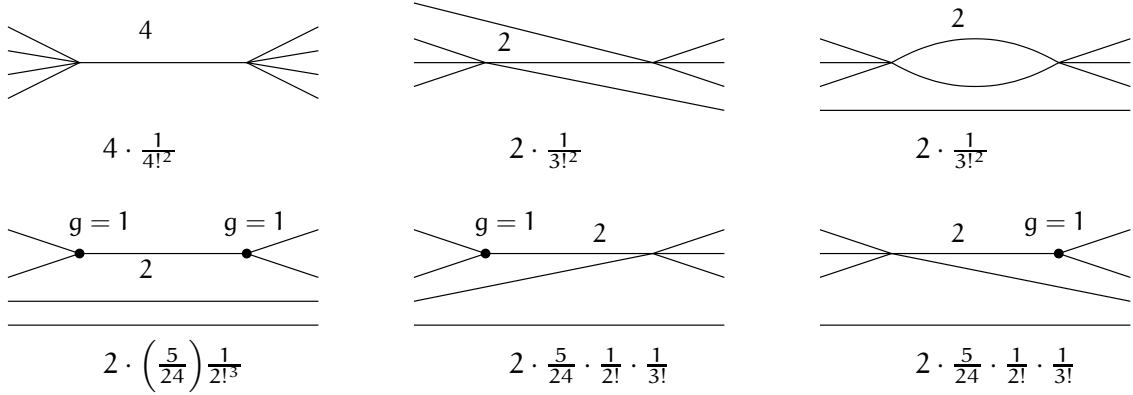}
\caption{The combinatorial types of tropical covers contributing to $\langle (1)^4 | \tau_{3}(pt), \tau_{3}(pt)| (1)^4 \rangle_{0,2}^{\PP^1,4,\bullet, trop}$.}\label{fig-exd4}
\end{figure}
 
\end{example}




\subsection{Correspondence theorem for general target curves.} 
Our correspondence theorem \ref{thm-corres} extends essentially without modification for a genus $1$ target curve. 

\begin{theorem}
Let $E$ be an elliptic curve. We have an equality 
$$\langle  \tau_{k_1}(pt)\ldots\tau_{k_n}(pt) \rangle ^{E,d}_{g,n}
= \sum_{\pi:\Gamma\to E^{\trop}} \frac{1}{| \Aut(\pi)|} \prod_V m_V \prod_e \omega(e).
$$
The sum is taken over all tropical covers $\Gamma \to E^{\trop}$ where $E^{\trop}$ is a cycle graph of any fixed edge length, and the covering and multiplicities are as in $ (A),(C),(D),(E)$ of Theorem \ref{thm-corres}.
\end{theorem}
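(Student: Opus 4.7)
The plan is to run the same argument as in the proof of Theorem \ref{thm-corres}, replacing the chain of rational curves by a cyclic degeneration of $E$. Concretely, I would construct a one-parameter family $\mathscr{E}_t$ whose generic fiber is $E$ and whose central fiber $C = C_1 \cup \cdots \cup C_n$ is a cycle of $n$ smooth rational curves, where $C_i$ meets $C_{i+1}$ at a node $q_i$ (indices read cyclically modulo $n$), and where sections $s_i(t)$ specialize to a chosen point $p_i \in C_i$. Such a family exists by the standard degeneration of an elliptic curve to a cycle of rational curves (a ``necklace''), and the toric/logarithmic perspective explained in the remark after Theorem \ref{thm-corres} again realizes each tropical cover $\pi : \Gamma \to E^{\trop}$ as the combinatorial shadow of a stratum of relative stable maps to this central fiber.

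Applying the degeneration formula (Theorem \ref{degfor}) iteratively at each of the $n$ nodes $q_1,\ldots,q_n$ expresses $\langle \tau_{k_1}(pt)\ldots\tau_{k_n}(pt)\rangle^{E,d}_{g,n}$ as a sum, indexed by $n$-tuples of partitions $\mu^{(1)},\ldots,\mu^{(n)} \vdash d$ (one per node), of products
\[
\prod_{i=1}^n |\xi(\mu^{(i)})| \cdot \langle \mu^{(i-1)} | \tau_{k_i}(pt) | \mu^{(i)} \rangle^{\PP^1,d,\bullet}_{g_i,1},
\]
with the cyclic identification $\mu^{(0)} = \mu^{(n)}$. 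Here, a single relative two-point $\PP^1$-invariant is attached to each component $C_i$, because --- exactly as in the proof of Theorem \ref{thm-corres} --- a dimension count forces every connected component of the source that does not carry a marked point to be a rational component mapping trivially to its $\PP^1$ target (ramified only over the two adjacent nodes), and these contribute trivially after being absorbed into the bookkeeping of $|\xi(\mu^{(i)})|$.

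To each non-zero term I associate the dual graph $\Gamma$ of a representative relative map, with expansion factor on each edge equal to the ramification order at the corresponding node; after forgetting $2$-valent genus $0$ vertices, this produces exactly a tropical cover of $E^{\trop}$ in the sense of conditions (A), (C), (D), (E) of Theorem \ref{thm-corres}. The remaining verification is purely bookkeeping: the automorphism and edge-expansion prefactors $|\xi(\mu^{(i)})|$ reassemble into $\frac{1}{|\Aut(\pi)|}\prod_e \omega(e)$, and the local contribution at a vertex $v_i$ over $p_i$ is precisely the connected relative descendant invariant appearing in \eqref{localmv}.

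The main conceptual difference from the $\PP^1$ case, and the step I would check most carefully, is the accounting of the source genus. Here the target tropical curve $E^{\trop}$ has $h_1 = 1$, so the source tropical curves $\Gamma$ generically carry nontrivial first Betti number, and the genus distributes according to $g(\Gamma) = h_1(\Gamma) + \sum_p g(p) = g$. One must check that after iterating the degeneration formula around the cycle, the relation between the vertex genera $g_i$, the loops of $\Gamma$, and the degree $d$ is exactly reproduced by Riemann--Hurwitz at each vertex together with the global constraint $\sum k_i = 2g - 2 + 2d$ for the elliptic target (as opposed to $2g - 2 + 2d + \ell(\mu) + \ell(\nu)$ in the $\PP^1$ case). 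The main obstacle will be verifying that this cyclic gluing does not introduce any over- or undercounting relative to the direct iterated application of the two-component degeneration formula; this is handled by the standard reduction to a single smoothing at a time (smoothing $n-1$ nodes first to reduce to a genus one curve with one node, then smoothing the last node), which keeps the formula in the form given by Theorem \ref{degfor}.
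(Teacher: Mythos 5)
Your proposal follows essentially the same route as the paper's proof, which is a one-liner: degenerate $E$ to a cycle of rational curves and rerun the argument of Theorem \ref{thm-corres}; all of the bookkeeping you spell out (iterating the degeneration formula around the cycle, absorbing the trivially-covering rational components, matching $|\xi(\mu^{(i)})|$ against $\frac{1}{|\Aut(\pi)|}\prod_e\omega(e)$) is left implicit there. One correction to your side remark on dimensions: for an elliptic target the stationary constraint is $\sum k_i = 2g-2-d(2h-2) = 2g-2$ (cf.\ Definition \ref{tropgwigentc} with $h=1$), not $2g-2+2d$ --- the degree drops out because $c_1(T_E)$ is trivial --- and likewise the relative $\PP^1$ constraint is $\sum k_i = 2g+\ell(\mu)+\ell(\nu)-2$ with no $2d$ term; this does not affect the structure of your argument, but you would need the correct count to carry out the genus verification you flag.
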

\begin{proof}
The proof is analogous to the proof of  Theorem \ref{thm-corres}, after  one chooses a degeneration of the elliptic curve $E$  to a cycle of rational curves.
\end{proof}

In order to  extend the correspondence theorem to any target curve, we need local Hurwitz number as multiplicities for certain vertices. 

\begin{definition}\label{tropgwigentc}
Let $g,h$ be two non-negative integers, $Y$ an explicit, trivalent tropical curve of genus $h$,  $k_1, \ldots, k_n$ nonnegative integers such that $\sum k_i = 2g-2 -d(2h-2)$. We define
the {\it tropical descendant GW invariant}
$$\langle \tau_{k_1}(pt)\ldots\tau_{k_n}(pt) \rangle ^{Y,d, \trop}_{g,n}
= \sum_{\pi:\Gamma\to Y} \frac{1}{| \Aut(\pi)|} \prod_V m_V \prod_e \omega(e)
$$
where
\begin{enumerate}[(A)]
\item The map $\pi:\Gamma\to Y$ is a connected tropical cover.
\item There is a unique vertex of $\Gamma$ over each one of $n$ fixed points $p_1,\ldots, p_n$ on the edges of $Y$.
\item The unique vertex $v_i$ over $p_i$  has genus $g_i$, and valence $k_i+2-2g_i$.  If the star of $v_i$ has right (resp. left) hand side expansion factors given by $\mu_i$ (resp. $\nu_i$), the multiplicity of the vertex $v_i$ is defined to be
\begin{equation}
m_{v_i} = |Aut(\mu_i)||Aut(\nu_i)|\int_{\overline M_{g_i,1}(\PP^1,\mu_i,\nu_i)} \psi^{k_i} ev^\star(pt).
\end{equation}
\item Vertices of $\Gamma$ lying over the vertices of $Y$ have multiplicity $m_V= H(v)$, the local Hurwitz number.
\item The product of the expansion factors $\omega(e)$ runs over the set of all bounded edges of $\Gamma$.
\end{enumerate}
\end{definition}

The tropical descendant invariant just defined agrees with the corresponding classical invariant following the same reasoning as Theorem \ref{thm-corres}: after the base curve is degenerated to a collection of three pointed rational curves, the tropical graphs correspond to the dual graphs of the degenerate covers counted with the appropriate multiplicities. Over rational components where all three points are nodes, one obtains zero-dimensional moduli spaces of three-pointed relative stable maps to $\PP^1$, whose degrees are three-pointed Hurwitz numbers.  

\subsection{Genus zero and one vertex multiplicities}\label{subsec-g1mult} 

We conclude this section with a computation of the genus  zero and one vertex multiplicities. Such multiplicities are given, in generating function form, in \cite{OP06}; we find it valuable, in these simple cases, to provide a direct geometric derivation of these coefficients.

\begin{lemma} The genus zero vertex multiplicities are given by:
\begin{equation}
\label{gzeropix}
|Aut(\mu)||Aut(\nu)|\langle \mu| \tau_{\ell(\mu)+\ell(\nu)-2}(pt)|\nu \rangle ^{\PP^1,d}_{0,1} = 1.
\end{equation}
\end{lemma}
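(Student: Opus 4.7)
The strategy is to invoke the Gromov--Witten/Hurwitz correspondence to rewrite the one-pointed relative descendant invariant as a three-point genus zero Hurwitz number, and then to evaluate the latter by a direct count in the symmetric group.

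Applying the substitution rule $\tau_k(pt) \mapsto \frac{1}{k!}\overline{(k+1)}$ and expanding in completion coefficients as in Proposition~\ref{compco}, the invariant becomes a linear combination of three-point Hurwitz numbers $H_0^\bullet(\mu, \nu, \tilde\lambda)$ weighted by $\rho_{k+1, \lambda}/k!$. Genus zero Riemann--Hurwitz imposes $\ell(\mu) + \ell(\nu) + \ell(\tilde\lambda) = d + 2$. Combined with $k = \ell(\mu) + \ell(\nu) - 2$ and the padding relation $\ell(\tilde\lambda) = \ell(\lambda) + d - |\lambda|$, this yields $|\lambda| - \ell(\lambda) = k$. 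Since completion coefficients vanish for $|\lambda| > k+1$, and $|\lambda| = k+1$ forces $\ell(\lambda) = 1$, the only surviving term is $\lambda = (k+1)$ with $\rho_{k+1, (k+1)} = 1$, hence
\[
\langle \mu \,|\, \tau_k(pt) \,|\, \nu \rangle^{\PP^1,d}_{0,1} = \frac{1}{k!}\, H_0^\bullet(\mu,\nu,(k+1, 1^{d-k-1})).
\]

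The remaining task is to evaluate this three-point Hurwitz number. I would fix a $(k+1)$-cycle $\sigma_3 \in S_d$ and count the admissible factorizations $\sigma_1 \sigma_2 = \sigma_3^{-1}$ with $\sigma_1,\sigma_2$ of cycle types $\mu,\nu$. The extremality of $k$ dictated by the dimension constraint guarantees that transitivity of the monodromy is automatic, so the connected and disconnected counts agree. A direct combinatorial count exploiting the one-large-cycle structure of $\sigma_3$ yields that the Hurwitz number equals $k!/(|\mathrm{Aut}(\mu)|\,|\mathrm{Aut}(\nu)|)$, which upon substitution gives the desired identity.

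The principal obstacle is the explicit combinatorial evaluation of the three-point Hurwitz number: tracking the labeled-versus-unlabeled conventions for preimages of the relative points is delicate and is precisely the source of the automorphism factors $|\mathrm{Aut}(\mu)|$ and $|\mathrm{Aut}(\nu)|$. An alternative, more geometric route would be induction on $\ell(\mu) + \ell(\nu)$ using the degeneration formula of Theorem~\ref{degfor}: one peels off a single part of $\mu$ or $\nu$ at each step, reducing to the base case $\mu = \nu = (d)$. In that base case the moduli space consists of the map $z \mapsto z^d$ up to source automorphism, the descendant reduces to $\tau_0(pt) = ev^\star(pt)$, the integral evaluates directly to $1$, and $|\mathrm{Aut}((d))|^2 = 1$, so the identity holds at once.
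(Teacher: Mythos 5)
Your reduction to a single three-point Hurwitz number does not hold, and this is the decisive gap. When you write ``genus zero Riemann--Hurwitz imposes $\ell(\mu)+\ell(\nu)+\ell(\tilde\lambda)=d+2$'' you are implicitly treating each term $H^\bullet_d(\mu,\nu,\tilde\lambda)$ in the completed-cycle expansion as a connected genus-zero count. But the GW/H correspondence is an identity with \emph{disconnected} Hurwitz numbers, and the Euler characteristic of the covers counted by $H^\bullet_d(\mu,\nu,\tilde\lambda)$ varies with $\lambda$: the lower-order completion terms are nonzero disconnected counts (a smaller connected piece through the descendant point plus trivial cylinders, which is exactly the geometric content of convention \eqref{mutilde}) and they are essential corrections, not terms you may discard. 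A concrete counterexample: take $d=2$ and $\mu=\nu=(1,1)$, so $k=2$ and $k+1=3>d$; your formula gives $\frac{1}{2!}H^\bullet_2(\mu,\nu,(3,\dots))=0$ since $(3)$ does not fit in degree $2$, whereas the lemma asserts the invariant equals $1/4$ (and indeed $\int_{\overline{M}_{0,5}}\psi_1^2=1$). Even when the leading term survives, your asserted evaluation $H^\bullet(\mu,\nu,(k+1,1^{d-k-1}))=k!/(|\Aut(\mu)||\Aut(\nu)|)$ fails: for $d=3$, $\mu=\nu=(2,1)$ one counts $6$ factorizations of a $3$-cycle into two transpositions over the two $3$-cycles, giving $H^\bullet=1$, not the predicted $2$; your formula then yields $1/2$ against the correct value $1$. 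The sketched inductive alternative is also not workable as stated, since degenerating the target does not ``peel off one part of $\mu$'' but rather introduces a sum over all intermediate ramification profiles.

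For comparison, the paper's proof avoids Hurwitz theory entirely: it rewrites the left-hand side as a rubber integral (the convention that relative points are marked absorbs the factors $|\Aut(\mu)||\Aut(\nu)|$), observes that the stabilization map $st:\overline{\mathcal{M}}^\sim_{0,1}(\PP^1,\mu,\nu,d)\to\overline{M}_{0,1+\ell(\mu)+\ell(\nu)}$ is birational in genus zero, checks that $\psi_1$ is pulled back from $\overline{M}_{0,n}$ because it sits at a non-relative marking (so no boundary correction arises), and concludes by the projection formula and $\int_{\overline{M}_{0,n}}\psi_1^{n-3}=1$. If you want to salvage a symmetric-group argument, you would have to keep all completion terms and their geometric weights, which is essentially redoing the GW/H correspondence rather than applying it.
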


\begin{proof}
The first step of the proof consists in realizing the left hand side of \eqref{gzeropix} as an integral over a space of rubber stable maps \cite{MP06}. We adopt the convention that in rubber relative stable maps all relative points are marked, which allows us to absorb the automorphism factors:
\begin{equation}
\label{gzeropixt}
|Aut(\mu)||Aut(\nu)| \langle \mu| \tau_{\ell(\mu)+\ell(\nu)-2}(pt)|\nu \rangle ^{\PP^1,d}_{0,1} = \langle \mu| \tau_{\ell(\mu)+\ell(\nu)-2}|\nu \rangle ^{\sim \PP^1d,}_{0,1} 
\end{equation}

Once the relative points are marked, there is a stabilization morphism from the moduli space of rubber, relative stable maps to the moduli space of curves:
$$
st: \overline{\mathcal{M}}^\sim_{0,1} (\PP^1, \mu,\nu,d) \to \overline{M}_{0,1+\ell(\mu)+\ell(\nu) }.
$$ 
In genus zero, the map $st$ is easily seen to be birational. We need to compare the $\psi$ class on the moduli space of rubber relative stable maps with the pullback of the $\psi$ class on the moduli space of curves: there are a series of very similar comparison lemmas in the literature~\cite{bssz:psi, cm:cmb, Ion02} that can be easily adapted to this situation. The $\psi$ class on the moduli space of maps equals the pullback of the $\psi$ class on the moduli space of curves corrected by a positive multiple the divisor of maps where the special marked point lies on a rational component that becomes unstable after forgetting the map via $st$. Since in this case   $\psi_1$  is supported on a non-relative point, there is no such correction and the class  is pulled back from the moduli space of curves. We can therefore apply projection formula and compute:
$$
 \langle \mu| \tau_{\ell(\mu)+\ell(\nu)-2}|\nu \rangle ^{\sim \PP^1,d}_{0,1} 
 =  \int_{\overline{M}_{0,1+\ell(\mu)+\ell(\nu) }
} \psi^{\ell(\mu)+\ell(\nu)-2}_1 =1.
$$ 
\end{proof}

 \begin{lemma} The genus one vertex multiplicities are given by:
\begin{equation}
\label{gonepix}
|Aut(\mu)||Aut(\nu)|\langle \mu| \tau_{\ell(\mu)+\ell(\nu)}(pt)|\nu \rangle ^{\PP^1,d}_{1,1} = \frac{\sum \mu_i^2 +\sum \nu_i^2 -1}{24}.
\end{equation}
\end{lemma}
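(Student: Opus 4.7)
My plan is to adapt the rubber conventions strategy of the genus zero lemma, with careful bookkeeping of the boundary corrections that the psi class acquires in genus one. First, as in the previous proof, I would absorb the automorphism factor using rubber:
$$|\Aut(\mu)||\Aut(\nu)| \langle \mu | \tau_{k}(pt) | \nu\rangle^{\PP^1, d}_{1,1} = \langle \mu|\tau_{k}|\nu\rangle^{\sim\PP^1, d}_{1,1},$$
where $k = \ell(\mu)+\ell(\nu)$, and then consider the stabilization morphism $st:\overline{\mathcal{M}}^\sim_{1,1}(\PP^1,\mu,\nu,d)\to \overline{M}_{1, 1+k}$. Unlike in genus zero, $st$ is no longer birational, and the rubber psi class $\psi_1^\sim$ fails to coincide with $st^*\psi_1$. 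Adapting the standard psi-class comparison lemmas \cite{Ion02, bssz:psi, cm:cmb} to this setting yields a formula of the form
$$\psi_1^\sim = st^*\psi_1 - \sum_{i} \mu_i\,D^0_{\mu_i} - \sum_{j} \nu_j\,D^\infty_{\nu_j},$$
where $D^0_{\mu_i}$ (respectively $D^\infty_{\nu_j}$) is the divisor parametrizing maps whose source has a rational bubble containing the marked point, attached to the main genus-one component at a single node with ramification $\mu_i$ (respectively $\nu_j$), and mapping to a rubber bubble of the target above $0$ (respectively $\infty$).

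I would then expand $(\psi_1^\sim)^k$ via this comparison and keep only the terms with at most one $D$-factor, since higher products vanish by disjointness of the distinct boundary divisors and dimension counting. The main term $st^*(\psi_1^k)$ pushes forward through $st$; its support is (generically) contained in the boundary divisor of $\overline{M}_{1,1+k}$ parametrizing curves with an elliptic tail at the marked point, which splits as $\overline{M}_{1,1}\times \overline{M}_{0,1+k}$, and the integral factors through $\int_{\overline{M}_{1,1}}\psi = \tfrac{1}{24}$ together with the genus zero computation on $\overline{M}_{0, 1+k}$, producing a contribution of $-\tfrac{1}{24}$. On each boundary divisor $D^0_{\mu_i}$, the rubber moduli splits as a product of a rational bubble factor (a one-pointed genus zero rubber invariant, evaluating to $1$ by the previous lemma) and a genus-one Hurwitz factor that equals $1$ on the relevant stratum; together with the coefficient $\mu_i$ from the comparison and the edge-expansion weight $\mu_i$ from the gluing node, this gives a contribution of $\mu_i^2/24$, and the analogous analysis on the $\nu$-side yields $\nu_j^2/24$.

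Summing the three types of contribution recovers $(\sum\mu_i^2 + \sum\nu_j^2 - 1)/24$, as claimed. The hard part will be pinning down the precise psi-class comparison formula in the rubber setting, particularly the coefficients on the boundary divisors, which are subtly altered by the $\CC^*$ quotient and require careful tracking of bubble multiplicities; once the comparison is in hand, the remainder of the argument is a projection-formula computation that recycles the genus zero lemma.
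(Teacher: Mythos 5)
Your setup (absorbing the automorphism factors via rubber maps and passing to the stabilization morphism $st:\overline{\mathcal{M}}^\sim_{1,1}(\PP^1,\mu,\nu,d)\to\overline{M}_{1,1+k}$) matches the paper's, but the core of your argument has a genuine gap: you never account for the virtual fundamental class. In genus one the rubber space has excess-dimensional components and $st$ is far from birational, so one cannot evaluate the ``main term'' by integrating $st^*(\psi_1^k)$ against the actual fundamental class; indeed $\psi_1^k$ has codimension $k$ on $\overline{M}_{1,1+k}$, which has dimension $k+1$, so your main term is a curve class, not a number, until it is paired with a divisor class. The paper supplies exactly this missing divisor: by the projection formula the invariant equals $\int_{\overline{M}_{1,1+k}} st_*\bigl([\,\cdot\,]^{vir}\bigr)\,\psi_1^{k}$, and $st_*\bigl([\,\cdot\,]^{vir}\bigr)$ is the genus-one double ramification cycle, evaluated via Hain/Pixton's formula $DRC_1(\mathbf{x})=-\lambda_1+\sum_i\tfrac{x_i^2}{2}\psi_i-\tfrac12\sum_{\mathcal{I}}(\sum_{i\in\mathcal{I}}x_i)^2D(0,\mathcal{I})$. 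The $-\tfrac{1}{24}$ then comes from $\int\lambda_1\psi_1^k=\tfrac1{24}$, not from an elliptic-tail support argument ($\psi_1^k$ is not supported on the elliptic-tail divisor in any sense that yields an integral), and the $\mu_i^2,\nu_j^2$ terms emerge only after combining $\int\psi_i\psi_1^k=\tfrac{k}{24}$, the $D(0,\mathcal{I})$ evaluations for $|\mathcal{I}|=2$, and the relation $\sum x_i=0$.

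Your $\psi$-class comparison is also the opposite of what the paper uses: since the descendant insertion sits at the non-relative marked point, the comparison lemmas give \emph{no} correction divisor and the class is pulled back on the nose, exactly as in the genus-zero lemma. Your proposed correction $-\sum_i\mu_i D^0_{\mu_i}-\sum_j\nu_j D^\infty_{\nu_j}$, and the subsequent assignment of $\mu_i^2/24$ to each boundary divisor (with the factor $1/24$ appearing unexplained and the two factors of $\mu_i$ obtained by multiplying a comparison coefficient by a node multiplicity), is reverse-engineered to the target formula rather than derived. The structural point the proof is missing is that all of the combinatorial content of the answer lives in the pushed-forward virtual class (the DRC), not in a $\psi$-class comparison.
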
 

\begin{proof}
As in the previous lemma,
we express the left hand side of \eqref{gonepix} as an integral over a space of rubber stable maps \cite{MP06}. Again, we adopt the convention that in rubber relative stable maps all relative points are marked, which allows us to absorb the automorphism factors,
\begin{equation}
\label{gonepixt}
|Aut(\mu)||Aut(\nu)| \langle \mu| \tau_{\ell(\mu)+\ell(\nu)}(pt)|\nu \rangle ^{\PP^1,d}_{1,1} = \langle \mu| \tau_{\ell(\mu)+\ell(\nu)}|\nu \rangle ^{\sim \PP^1,d}_{1,1} 
\end{equation}
and to have a well-defined stabilization morphism:
$$
st: \overline{\mathcal{M}}^\sim_{1,1} (\PP^1, \mu,\nu,d) \to \overline{M}_{1,1+\ell(\mu)+\ell(\nu) }.
$$ 
The  $\psi$ class is supported on a non-relative point, hence pulled back from the moduli space of curves;
we apply the projection formula and compute:
$$
 \langle \mu| \tau_{\ell(\mu)+\ell(\nu)}|\nu \rangle ^{\sim \PP^1,d}_{1,1}  = \int_{\overline{M}_{1,1+\ell(\mu)+\ell(\nu) }
} st_\ast([\overline{\mathcal{M}}^\sim_{1,1} (\PP^1, \mu,\nu,d) ]^{vir}) \psi^{\ell(\mu)+\ell(\nu)}_1.
$$ 
The pushforward of the virtual fundamental class in the moduli space of rubber relative stable maps is a tautological class, called {\it double ramification cycle} (DRC), that has received a great deal of recent attention~\cite{CMW,CJ16,GZ14,Hai11,MW13}. In \cite{Hai11}, Hain provided a formula for the restriction of the DRC to the locus of curves of compact type, which led Pixton to conjecture a formula for the DRC as a decorated sum of dual graphs, where the decorations depend on the combinatorics of the graphs. Pixton's conjecture is now a theorem of Janda, Pandharipande, Pixton and Zvonkine~\cite{JPPZ}. The reader is referred to the survey~\cite{Cav14} for details and references.

While in general Pixton's formula is rather involved, in genus one it can be described concisely. To make our computation a little more elegant, let us define a vector of integers
$$
\mathbf{x} = (x_1, \ldots, x_{1+\ell(\mu)+\ell(\nu)}) = (0, \mu_1, \ldots, \mu_{\ell(\mu)}, -\nu_1, \ldots, -\nu_{\ell(\nu)}),
$$
and we denote $DRC_1(\mathbf{x})$ the double ramification cycle we are after.

We have:
\begin{equation}\label{pixfor}
DRC_1(\mathbf{x}) =-\lambda_1+ \sum_{i=1}^{1+\ell(\mu)+\ell(\nu)} \frac{x_i^2\psi_i}{2} - \frac{1}{2}\left( \sum_{\mathcal{I} \subseteq [n+1]} (\sum_{i\in \mathcal{I}} x_i)^2 D(0,\mathcal{I})\right) ,
\end{equation}
where $D(0,\mathcal{I})$ denotes the divisor where the marks belonging to the subset $\mathcal{I}$ lie on a rational component, and the complementary marks on a genus one component. We now  compute  the degree of $DRC_1(\mathbf{x})\psi_1^{\ell(\mu)+\ell(\nu)}$.
Using string and dilaton equations (see for instance~\cite{Vak08}), one evaluates:
\begin{equation}
\label{easy}
\lambda_1 \psi_1^{\ell(\mu)+\ell(\nu)} = \frac{1}{24}, \ \ \ \ \ \ \ \ \ \ \ \  \psi_1\psi_i^{\ell(\mu)+\ell(\nu)} = \frac{\ell(\mu)+\ell(\nu)} {24} 
\end{equation}
By dimension reasons, $D(0,\mathcal{I})\psi_1^{\ell(\mu)+\ell(\nu)}=0$ unless $\mathcal{I}$ has precisely two elements, and in this case:
\begin{equation}
\label{alsoeasy}
 D(0,\mathcal{I})\psi_1^{\ell(\mu)+\ell(\nu)} = \frac{1}{24}
\end{equation}
Combining the evaluations \eqref{easy}, \eqref{alsoeasy} with  formula \eqref{pixfor}, we obtain:
$$
DRC_1(\mathbf{x})\psi_1^{\ell(\mu)+\ell(\nu)} = -\frac{1}{24}+ \sum_{i} \frac{(\ell(\mu)+\ell(\nu))x_i^2}{48} - \sum_{i\not=j} \frac{(x_i+x_j)^2}{48},
$$ 
 which is only some elementary algebraic simplifications away from the formula in Lemma \ref{gonepix}. We won't go into detail here, but remark that one may use the relation 
 $\sum_i x_i = 0$ to get rid of all the mixed terms of the form $x_i x_j$ and then  keep track of the remaining square terms.
 \end{proof} 
\section{Tropical GW/Hurwitz equivalence}\label{sec-gwh}

In this section we make a direct comparison between tropical Hurwitz numbers and tropical descendant invariants, which we call the \emph{tropical GW/Hurwitz equivalence}. 
Even though one may deduce this correspondence theorem by ``going around" Diagram \ref{square}  and using the algebraic GW/Hurwitz equivalence,  the direct connection shows once more that tropical geometry reveals a finer connection: the GW/Hurwitz equivalence can be broken down to a relation which works cover by cover, graph by graph, revealing tropical geometry again as a user-friendly graphical interface for GW-theory of curves. Note that the data of the completion coefficients (see \cite[Section 0.4]{OP06}) is used in the tropical GW/H-equivalence --- it should again be viewed as local input data which is needed on the tropical side.


\begin{lemma}[Local GW/Hurwitz correspondence]\label{loc-GWH}
Let $\Gamma$ be a graph consisting of one vertex of genus $g$ and $n$ ends labelled with weights $x_1, \ldots, x_{n}$ such that $\sum x_i=0$. Define $k = 2g+n-2$.  Denote by $\mu$ (resp. $\nu$) the partition of $d$ consisting of all positive entries of $\mathbf{x}$ (resp. the absolute value of all negative parts of $\mathbf{x}$). Let $\mathcal{S}$ denote the set of decorated, possibly disconnected, tropical covers of a tripod  $X \to T$ (see Figure~\ref{fig-tripod}) with a subset of ends marked, satisfying the following conditions:
\begin{figure}[h!]
\includegraphics{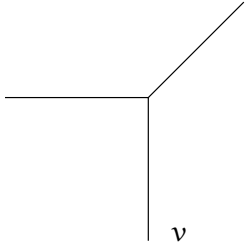}
\caption{The tripod $T$. We pick one of the ends of $T$ (in this picture marked by $v$) and refer to it as the \emph{vertical end} of $T$.}\label{fig-tripod}
\end{figure}

\begin{itemize}
\item Ignoring the genus labeling of vertices, $\Gamma=X/\sim $, where $\sim$ identifies all vertices of $X$ and contracts the ends covering the vertical end of the tripod $T$. We refer to the latter as the {\it vertical ends} of $X$.
\item All marked ends are vertical. 

		\item Any connected component of $X$ has at least one marked end.
		\item Every vertical end with expansion factor strictly larger than one is marked.\medskip

\noindent
	We denote by $\mu_X$ the partition whose parts are the expansion factors of the marked vertical ends
	\item  the genus $g_X$ of $X$ satisfies the following equation:
	$$
	2g_X + \ell(\mu_X)+k - |\mu_X|   = 2g.
	$$
\end{itemize}
Then the following equality holds:
\begin{equation}\label{localg}
\langle \mu | \tau_{k}(pt)| \nu \rangle_{g,1}^{\PP^1,d} = \sum_{X\in \mathcal{S}}\frac{ \rho_{k+1,\mu_X}}{k!} \prod_{v\in X}H(v),
\end{equation}
where $\rho_{k+1,\mu_X}$ is the completion coefficient (\ref{eq-compco}), and for all vertices $v$ of $X$, $H(v)$ denotes the local Hurwitz number (\ref{lochu}).
\end{lemma}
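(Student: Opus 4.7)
The strategy combines the degeneration formula for the target $\PP^1$ at the descendant insertion point, the tropical Hurwitz correspondence, and the geometric characterization of completion coefficients from Proposition \ref{compco}. First, I would degenerate $\PP^1$ into $X_1\cup_p X_2$, with $0, \infty$ on $X_1$ and the descendant insertion on $X_2$. The degeneration formula (Theorem \ref{degfor}) yields
\begin{equation*}
\langle \mu|\tau_k(pt)|\nu\rangle^{\PP^1,d}_{g,1} = \sum_\eta |\xi(\eta)|\cdot \langle \mu|\eta|\nu\rangle^{X_1,d,\bullet}\cdot \langle \eta|\tau_k(pt)\rangle^{X_2,d,\bullet},
\end{equation*}
with $\eta$ the ramification profile at the node $p$.

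The triple-relative piece $\langle \mu|\eta|\nu\rangle^{X_1,d,\bullet}$ is a three-point Hurwitz number on $\PP^1$, which by the tropical correspondence for Hurwitz numbers is a weighted sum over tropical Hurwitz covers of the tripod $T$ with ramification profiles $(\mu, \eta, \nu)$; since $T$ has no bounded edges, each multiplicity reduces to $\frac{1}{|\Aut(\pi)|}\prod_v H(v)$. For the one-relative piece $\langle \eta|\tau_k(pt)\rangle^{X_2,d,\bullet}$, writing $\eta = \widetilde{\mu_X}$ as the padding of an ``active'' partition $\mu_X$ by $K_X = d-|\mu_X|$ weight-one parts, Proposition \ref{compco} identifies this with $\frac{\rho_{k+1,\mu_X}}{k!}$ up to a factor $\prod_j k_j!\,\alpha_j^{k_j}$. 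Combined with $|\xi(\eta)|$ coming from the degeneration formula, these numerical factors reproduce the binomial coefficient $\binom{j_X+K_X}{K_X}$ from the padding convention \eqref{mutilde}, where $j_X$ is the number of ones already present in $\mu_X$.

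Finally, I would reorganize the double sum over $\eta$ and over tropical Hurwitz covers of the tripod as a single sum over decorated covers $X\in\mathcal{S}$. Given an underlying cover $\pi$ and a choice of $\mu_X$, the $\binom{j_X+K_X}{K_X}$ ways of choosing which weight-one vertical ends of $\pi$ are marked (all weight-$>1$ vertical ends being automatically marked, the remaining weight-one ends unmarked), modulo the action of $\Aut(\pi)$, parametrize exactly the isomorphism classes of decorated covers in $\mathcal{S}$ lying over $\pi$. The genus constraint $2g_X + \ell(\mu_X) + k - |\mu_X| = 2g$ is then Riemann--Hurwitz for $\pi$ together with $\ell(\widetilde{\mu_X}) = \ell(\mu_X) + K_X$ and the dimension constraint $\sum k_i = 2g + \ell(\mu)+\ell(\nu)-2$ coming from the LHS. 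The requirement that every connected component of $X$ carry a marked end encodes the fact that each source component mapping to $X_2$ in the algebraic degeneration must attach to the main body over $X_1$ via a genuine ramification point; components violating this would correspond to detached ghost components with no contribution to the one-pointed descendant invariant.

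The principal obstacle is the numerical bookkeeping: verifying that the centralizer $|\xi(\eta)|$, the Proposition \ref{compco} factor $\prod_j k_j!\,\alpha_j^{k_j}$, and the orbit-stabilizer quotient $\binom{j_X+K_X}{K_X}/|\Aut(\pi)|$ assemble into exactly $\frac{\rho_{k+1,\mu_X}}{k!}\prod_v H(v)$, and in particular confirming that any decoration with non-trivial stabilizer under $\Aut(\pi)$ is paired with a vanishing completion coefficient so that the orbit-counting identity holds termwise rather than only on average.
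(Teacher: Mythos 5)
Your proposal follows essentially the same route as the paper's own (much terser) proof: degenerate $\PP^1$ at the descendant insertion into $T_1\cup T_2$ with the relative conditions on $T_1$, apply the degeneration formula, read the triple-relative factor as a tropical cover of the tripod with local Hurwitz multiplicities, and identify the one-point descendant factor with the completion coefficient via Proposition~\ref{compco}, with the marked vertical ends recording which nodes attach to the distinguished component over $T_2$. Your version actually supplies more of the bookkeeping (the padding convention, the binomial count of markings, the genus identity) than the paper does; the only slight slip is in the connectivity remark, where the condition that every component of $X$ carries a marked end expresses that each component \emph{over $T_1$} must attach to the unique non-trivial component over $T_2$, rather than the other way around.
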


\begin{proof} Equation \eqref{localg} is a particular case of the classical GW/H correspondence. The base $\PP^1$ is degenerated to a nodal rational curve $T_1\cup T_2$ in such a way that the two relative conditions are on $T_1$, and the image of the marked point is on $T_2$. Given a relative stable map to such degenerate target,  the tropical curve $C$ is the dual graph of the inverse image of $T_1$, and the marked ends correspond to the nodes attaching to the connected component of the inverse image of $T_2$  supporting the marked point. Formula \eqref{localg} is then a combinatorial organization of the contributions of the degeneration formula (illustrated in an example in Figure \ref{fig:loc}).
\end{proof}

\begin{figure}[h!]
\includegraphics{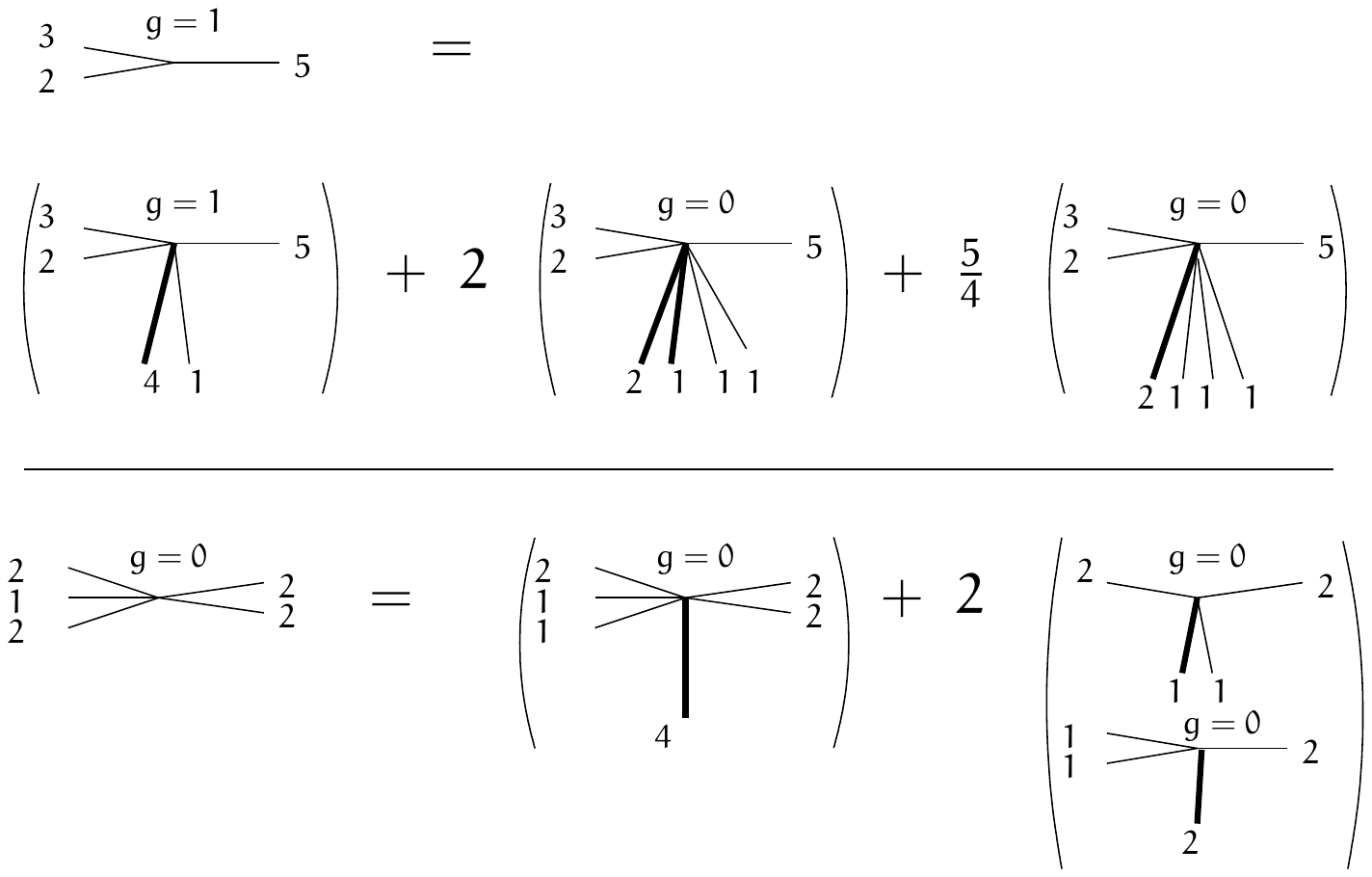}
\caption{Two examples of our local correspondence. In both cases we are looking at $k=3$, and the left hand side graph should be considered weighted by $3! m_v$, where $m_v$ the appropriate one pointed descendant invariant. In the first case, there are three possible ways of attaching vertical ends to the initial graph; note however than in the second and third graphs one has to lower the genus of the vertex by one. In the second example we can split our graph into two connected components (or a graph of genus $-1$), but each component must contain a marked vertical end.}\label{fig:loc}
\end{figure}

The general  tropical GW/H correspondence now consists in making the construction in Lemma \ref{loc-GWH} global. We wish to consider \eqref{localg} as an equation among the degrees of vectors consisting in formal linear combinations of appropriately decorated graphs, where $\deg(\sum m_i \Gamma_i):= \sum m_i$. 
In the following algorithm we establish a correspondence between the contribution of a graph that appears in the computation of a tropical descendant GW invariant, and the degree of linear combination of graphs that appear in the computation of appropriate Hurwitz numbers.
We define a {\it caterpillar tree} to be a  graph obtained by attaching vertical ends to a horizontal line (see Figure \ref{fig-caterpillar}). Arbitrary tropical Hurwitz numbers of $\PP^1$  (with $n$ ramification conditions) can be computed enumerating tropical covers of a caterpillar tree (with $n$ ends).

\begin{figure}
\includegraphics{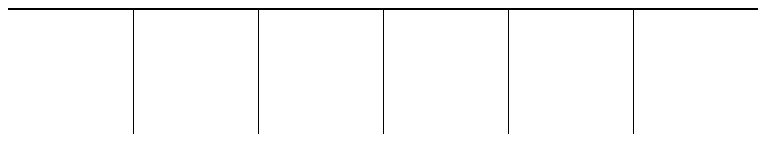}
\caption{A caterpillar tree.} \label{fig-caterpillar}
\end{figure}

\begin{algorithm} \label{tgwh}
Consider a tropical cover $f: \Gamma\to \PP^1_{\trop}$ contributing to the tropical descendant invariant $\langle \mu | \tau_{k_1}(pt),\ldots, \tau_{k_n}(pt)| \nu \rangle_{g,n}^{\PP^1, d,\bullet, trop}$ as in Definition \ref{tropgwi}.
Denote $Cont(\Gamma)$ the contribution of $\Gamma$ to the  GW invariant.  For every vertex $v$ of $\Gamma$, perform the following surgery on $Cont(\Gamma)\Gamma$:
\begin{itemize}
	\item replace $m_v star(v)$, which we think of as the left hand side of \eqref{localg}, with the linear combination of graphs $\sum_{X\in \mathcal{S}}  \left(\frac{ \rho_{k+1,\mu_X}}{k!} \prod_{v\in X}H(v)\right) X$, where all notation is as in Lemma \ref{loc-GWH}.
	\item for any point $x\not= v$ which is not a vertex $v$ but with $f(x)=f(v)$, $x$ belongs to some edge $e$ with some expansion factor $\omega(e)$. Attach $\omega(e)$ vertical ends to the point $x$, each with expansion factor $1$.
\end{itemize}
The result is linear combination of graphs that naturally cover a caterpillar tree $C$. Such graphs  appear in the tropical computation of $$H_g^{trop, \bullet}\left(\mu, \frac{\overline{k_1+1}}{k_1!},\ldots, \frac{\overline{k_n+1}}{k_n!}, \nu\right)$$
 with the same coefficients. The degree of such a linear combination is $Cont(\Gamma)$.
\end{algorithm}
The statement in the last sentence of the algorithm is a consequence of \eqref{localg}: the vertex multiplicities $m_v$ for tropical descendant invariants have been replaced by the appropriate local Hurwitz numbers and completion coefficients.

\begin{remark} \label{rem:tgwh}
The procedure in  Algorithm \ref{tgwh} has an inverse: start from a  tropical Hurwitz cover $f:X\to C$ of a caterpillar tree that contributes to $H_g^{trop, \bullet}\left(\mu, \frac{\overline{k_1+1}}{k_1!},\ldots, \frac{\overline{k_n+1}}{k_n!}, \nu\right)$; note that by \eqref{mutilde}, for each vertical end $l$ of the caterpillar tree we have also a marking  of a subset of vertical ends of $X$ mapping to $l$ with expansion factor $1$ (this marking is in fact the complement of the marking of the vertical ends in the algorithm). For $v$ a vertex of $C$, identify all vertices  in $f^{-1}(v)$ that are adjacent to some  non-marked edges. Shrink all vertical ends. This graph contributes to the computation of 
 $\langle \mu | \tau_{k_1}(pt),\ldots, \tau_{k_n}(pt)| \nu \rangle_{g,n}^{\PP^1, d,\bullet, trop}$.
\end{remark}

Algorithm $\ref{tgwh}$ and Remark \ref{rem:tgwh} together establish the tropical GW/H correspondence. But in fact, they give a more refined correspondence that matches the contributions of individual graphs contributing to GW invariants with the contributions of groups of graphs that contribute to the completed cycle Hurwitz numbers. As with many combinatorial algorithms, the general description appears much more complicated than the algorithm is: we conclude the section by illustrating the algorithm with an example.

\begin{example}
Let us apply Algorithm \ref{tgwh} to the two left covers in the top row of Figure \ref{fig-exd4}, contributing  $4\cdot \frac{1}{4!^2}$ to
$\langle (1)^4 | \tau_{3}(pt), \tau_{3}(pt)| (1)^4 \rangle_{0,2}^{ \PP^1, 4, \bullet,trop}$.
For each vertex of the  top left cover, the local algorithm returns only one possible local modification, consisting in attaching a  marked vertical end with expansion factor $4$. The resulting cover of a caterpillar tree  is depicted in Figure \ref{fig-alg1}. 
 The multiplicity of this Hurwitz cover is $4\cdot \frac{1}{4!^2}$ since all local Hurwitz numbers for vertices are one, and so is the completion coefficient. This corresponds to the contribution of the initial graph to the descendant invariant.

\begin{figure}
\includegraphics{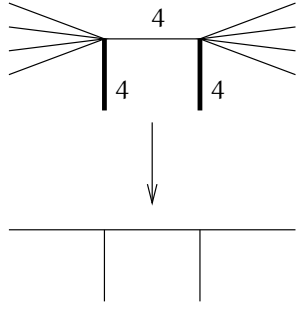}
\caption{The Hurwitz cover we obtain from the top left cover of Figure \ref{fig-exd4} following Algorithm \ref{tgwh}. Marked vertical ends are drawn with a thick line.}\label{fig-alg1}
\end{figure}


We turn  to the top middle cover of Figure \ref{fig-exd4}, which contributes  $2 \frac{1}{(3!)^2}$ to the descendant invariant
$\langle (1)^4 | \tau_{3}(pt), \tau_{3}(pt)| (1)^4 \rangle_{0, 2}^{ \PP^1,4, \bullet, trop}$.
Again, the local algorithm offers only one possibility for each vertex. The result of the surgery is in Figure \ref{fig-alg2}. The Hurwitz 
cover is weighted, for each vertex of the caterpillar tree, by the completion coefficient $\rho_{(2,1),4}=\frac{1}{3!}\cdot 2$; all local Hurwitz numbers are $1$. We have three compact edges with expansion factors $2,1, 1$ and two pairs of balanced forks, giving the cover an automorphism group of order $4$. All together the contribution of this cover is 
$2 \frac{1}{(3!)^2}$, which coincides with the contribution of the original graph to the descendant invariant.



\begin{figure}
\includegraphics{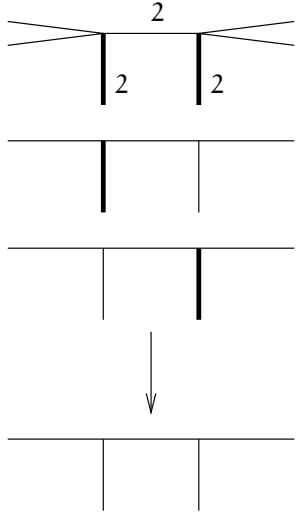}\caption{The Hurwitz cover we obtain from the top middle cover of Figure \ref{fig-exd4} following Algorithm \ref{tgwh}. Marked vertical edges are drawn with a thick line.}\label{fig-alg2}
\end{figure}
\end{example}

 \section{Fock spaces and Feynmann diagrams}
\label{fock}
 
In this section, we introduce bosonic Fock space, and explain how to express Gromov-Witten invariants as matrix elements on bosonic Fock space.  These matrix elements have expansions as Feynman diagrams, which are closely related to the tropical Gromov-Witten invariants we have discussed, as captured in the slogan 

\begin{center}
{\it ``Bosonification is Tropicalization''.}
\end{center}

 \subsection{Preliminaries on bosonic Fock}


The bosonic Heisenberg algebra $\mathcal{H}$ is the Lie algebra with basis $a_n, n\in \mathbb{Z}$ satisfying commutator relations
$$ [a_n, a_m]=(n\cdot \delta_{n,-m})a_0$$
where $\delta_{n,-m}$ is the Kronecker symbol. Note that $a_0$ is central, and is often denoted $c$ in other sources. 

The bosonic Fock space $F$ is a particular representation of $\mathcal{H}$.  It is generated by a single ``vacuum vector'' $v_\emptyset$.  The positive generators annihilate $v_\emptyset$: $a_n\cdot v_\emptyset=0$ for $n>0$; $a_0$ acts as the identity, and the negative operators act freely.  That is, $F$ has a basis $b_\mu$ indexed by partitions, where 
$$b_\mu=a_{-\mu_1}\dots a_{-\mu_m}\cdot v_{\emptyset}$$

In more familiar terms, the bosonic Fock space is isomorphic to a polynomial ring in infinitely many variables, $p_n, n>0$. The vacuum vector $v_\emptyset$ is the element $1\in \mathbb{C}[p_1,p_2,\dots]$, and the operators $a_n$ act as follows:
$$
a_n=\left\{\begin{array}{ll}  p_{-n}\cdot & n<0 \\ 
n\frac{\partial}{\partial p_n} & n>0 \end{array}\right.
$$
The vector $b_\mu$ is mapped to the monomial $p_{\mu_1}\cdots p_{\mu_m}$ under this isomorphism.

 We define an inner product on $F$ by declaring $\langle v_\emptyset | v_\emptyset \rangle=1$ and $a_n$ to be the adjoint of $a_{-n}$. Thus we have 
$$ \langle b_{\mu}| b_{\nu} \rangle = \mathfrak{z}(\mu) \delta_{\mu,\nu}.$$
where
$$\mathfrak{z}(\mu)=|\Aut(\mu)|\cdot\prod\mu_i$$
is the size of the centralizer of an element of cycle type $\mu$ in $S_{|\mu|}$.

Note that being a representation of a Lie algebra is equivalent to being equivalent to a representation of the universal enveloping algebra of that Lie algebra.  All this means in our case is that products of elements of $\mathcal{H}$ act on $F$, not just commutators of these elements.

Following standard conventions, we write 
$\langle \alpha|A|\beta\rangle$ for $\langle \alpha|A\beta\rangle$ for $\alpha, \beta \in F$ and an operator $A$ that is a product of elements $\mathcal{H}$. Such expressions are referred to as \emph{matrix elements}. We also write $\langle A \rangle$ for $\langle v_\emptyset |A|v_\emptyset \rangle$, such a value is called a \emph{vacuum expectation}.

Our goal now is to write relative Gromov-Witten invariants as  bosonic matrix elements.  This is largely a matter of rephrasing certain results from \cite{OP06}.  

\begin{remark}
Okounkov-Pandharipande \cite{OP06} express these Gromov-Witten invariants as matrix elements on the \emph{fermionic Fock space}, also known as the \textit{infinite wedge}.  Fermionic Fock space is closely related to bosonic Fock space by the boson-fermion correspondence, and it is possible to use the general method of ``bosonification'' to go directly from Okounkov-Pandharipande's expression to a bosonic one -- this is the approach taken by \cite{Li11}.  We take a different approach, bypassing any need for further discussion of fermionic Fock space.   For more on Fock spaces and the boson-fermion correspondence, we recommend \cite{KR}. 
\end{remark}

Before tackling Gromov-Witten theory, we express double Hurwitz numbers $H^\bullet_{g\rightarrow 0}(\mu,\nu)$ as a matrix element on bosonic Fock space as a warm-up.

\subsection{Hurwitz theory on the bosonic Fock space}

The normal ordering product sorts a product of operators $a_{k_i}$ so that the $k_i$ are decreasing.  

\begin{definition}
Let $k_i, 1\leq i \leq n$ be any integers, and let $\ell_i, 1\leq i\leq n$ be the same multiset of integers sorted in decreasing order, $\ell_1\geq \ell_2\geq \cdots \geq \ell_n$.  The \emph{normal ordered product} of the $a_{k_i}$, denoted $:\prod_{i=1}^n a_k:$, is defined by

$$:\prod_{i=1}^n a_{k_i} :\;\; = \prod_{i=1}^n a_{\ell_i}$$
\end{definition}
An important example of normal ordering occurs in the cut-join operator.

\begin{definition} The \emph{cut-join} operator is defined by:
\begin{equation}
\label{caj}\cutjoin =\sum_{i+j+k=0}\frac{1}{6} :a_i a_j a_k:\;\; = \frac{1}{2} \sum_{k\geq 0} \sum_{\substack{0\leq i\leq j \\i+j=k}} a_{-j}a_{-i}a_k+a_{-k}a_{i}a_{j}
\end{equation}
\end{definition}

The cut-join operator encodes the effect of multiplication by a transposition in the group center of the group algebra of $S_n$.  Recall that $C_\mu$ is the sum of all elements in the conjugacy class identified by the partition $\mu$, and denote by $T$ the sum of all transpositions.  If
$$T\cdot C_\mu=\sum_{\nu\vdash|\mu| } k_\mu^\nu C_\nu$$
then we have
$$\cutjoin b_\mu=\sum_{\nu\vdash |\mu|} k_\mu^\nu b_\nu$$
Intuitively, the meaning of the cut-join operator is that multiplying by a transposition either cuts a cycle of a permutation into two cycles, or joins two cycles into one, which are the content of the first and second terms in the last part of Equation \eqref{caj}.

It follows that double Hurwitz numbers can be written as matrix elements of the appropriate power of the cut-join operator:

$$H^\bullet_{g\to 0}(\mu,\nu)=\frac{1}{\mathfrak{z}(\mu)}\frac{1}{\mathfrak{z}(\nu)}\langle b_\mu |\cutjoin^r |b_\nu\rangle$$

\subsection{Gromov-Witten theory in the bosonic Fock space}

Gromov-Witten invariants may be expressed as vacuum expectations in a similar manner.  Each insertion of $\tau_k(pt)$ corresponds to an operator similar to, but slightly more complicated than, the cut-join operator.

The starting place for our bosonic expression for Gromov-Witten theory is the following result:

\begin{definition}
Define $\mathcal{S}(z)=\displaystyle{\frac{\mathrm{sinh}(z/2)}{z/2}}$.
\end{definition}

\begin{theorem}[{~\cite[Theorem 2]{OP06}}]\label{thm-onepointseries}
The one-point series is
$$ \sum_g \langle \mu|\tau_{2g-2+\ell(\mu)+\ell(\nu)}(pt)|\nu \rangle_{g,1}^{\mathbb{P}^1,|\mu|,\bullet} \cdot z^{2g}= \frac{1}{|\Aut(\mu)||\Aut(\nu)|}\frac{\prod \mathcal{S}(\mu_iz)\prod \mathcal{S}(\nu_iz)}{\mathcal{S}(z)}.$$
\end{theorem}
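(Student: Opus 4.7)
The plan is to reduce the left-hand side to a character sum via the GW/Hurwitz correspondence, then to evaluate it using operator formalism on Fock space. By the GW/H correspondence recalled in the preliminaries, the descendant insertion $\tau_k(pt)/k!$ corresponds under $CC$ to the completed cycle $\overline{k+1}\in \cZ\CC[S_d]$, so the disconnected one-point relative invariant $\langle \mu|\tau_k(pt)|\nu\rangle$ is (up to the factor $1/k!$ and a combinatorial correction) the coefficient of $C_e$ in $C_\mu\cdot \overline{k+1}\cdot C_\nu$ divided by $d!$. Multiplying both sides of the desired identity by $|\Aut(\mu)||\Aut(\nu)|$ absorbs the automorphism factors and reduces the problem to computing the generating series
$$
\sum_{g\geq 0}\frac{z^{2g}}{k_g!}\cdot \frac{|\Aut(\mu)||\Aut(\nu)|}{d!}\,[C_e]\bigl(C_\mu\cdot \overline{k_g+1}\cdot C_\nu\bigr),\qquad k_g:=2g-2+\ell(\mu)+\ell(\nu).
$$

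The next step is to diagonalize the class algebra in the basis of irreducible characters of $S_d$. Burnside's formula makes $C_\mu$ and $C_\nu$ act on the irreducible $V_\lambda$ by the scalars $|C_\mu|\chi^\lambda(\mu)/\dim\lambda$ and $|C_\nu|\chi^\lambda(\nu)/\dim\lambda$ respectively. The crucial input from OP06 is that the completed cycle $\overline{k+1}$ also acts diagonally on $V_\lambda$, with eigenvalue equal to the shifted power sum of Kerov and Olshanski,
$$
\mathbf{p}_{k+1}(\lambda)=\sum_{i\geq 1}\bigl[(\lambda_i-i+\tfrac12)^{k+1}-(-i+\tfrac12)^{k+1}\bigr].
$$
Substituting and summing over $g$ collapses the shifted power sums into differences of exponentials $e^{z(\lambda_i-i+1/2)}-e^{-z(i-1/2)}$. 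The resulting character sum is exactly the matrix element $\langle \mu|\mathcal{E}_0(z)|\nu\rangle$ of Okounkov's current operator on the infinite wedge, whose evaluation by Wick's theorem (equivalently, by principal specialization of Schur polynomials via the Jacobi-Trudi determinant) yields precisely $\prod\mathcal{S}(\mu_iz)\prod\mathcal{S}(\nu_jz)/\mathcal{S}(z)$; the factor $1/\mathcal{S}(z)$ emerges from the regularization of the divergent vacuum contribution $\sum_i e^{-z(i-1/2)}$, that is, from the requirement to pass to the \emph{connected} rather than the bare matrix element.

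The main obstacle is the Kerov-Olshanski identification of the completed cycle's eigenvalue with $\mathbf{p}_{k+1}(\lambda)$. This is not a formality: it requires the theory of shifted symmetric functions, properties of the Jucys-Murphy elements, and a careful determination of the subleading ``completion'' terms in $\overline{k+1}$ so that the naive central character $\sum_i \lambda_i^{k+1}$ gets correctly shifted. Once this is granted, the remaining calculation is a short Wick contraction on Fock space, and the passage from fermionic to bosonic Fock via the boson-fermion correspondence makes transparent the product-over-parts form of the answer — exactly the Feynman-diagram structure underlying the tropical/operator perspective advocated in this paper.
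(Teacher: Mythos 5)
The paper contains no proof of this statement: Theorem \ref{thm-onepointseries} is imported wholesale from \cite{OP06} (their Theorem 2) and used as a black box later in Section \ref{fock}, so there is no in-paper argument to compare yours against. Your sketch is instead a reconstruction of the Okounkov--Pandharipande proof: pass to a completed-cycle Hurwitz number, diagonalize the class algebra via Burnside, resum the shifted power sums $\mathbf{p}_{k+1}(\lambda)$ into the eigenvalue of the current operator $\mathcal{E}_0(z)$ on the infinite wedge, and evaluate the matrix element by commutator calculus. That outline is the right one, and the purely algebraic half of it (the collapse of $\sum_k \mathbf{p}_{k+1}(\lambda)z^k/k!$ into $\sum_i e^{z(\lambda_i-i+1/2)}$ together with the regularized vacuum term, and the evaluation of $\langle \mu|\mathcal{E}_0(z)|\nu\rangle$) is correct as described.

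The genuine gap sits exactly at the step you propose to ``grant.'' As the GW/H correspondence is set up both here and in \cite{OP06}, the completion coefficients $\rho_{k+1,\mu}$ are characterized geometrically by Proposition \ref{compco}: up to combinatorial prefactors they \emph{are} the connected one-point relative invariants $\langle \mu|\tau_k(pt)\rangle$. So invoking the GW/H substitution rule to replace $\tau_k(pt)$ by $\overline{(k+1)}/k!$, and then asserting that $\overline{(k+1)}$ acts on $V_\lambda$ by the Kerov--Olshanski shifted power sum, is not the citation of an independent combinatorial fact: it is precisely the claim that the geometric completion coefficients coincide with the algebraically defined completed cycles, which is the geometric heart of \cite{OP06} and is essentially interchangeable (via the character sum you then perform) with the theorem you are trying to prove. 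That identification is not obtained from the theory of shifted symmetric functions and Jucys--Murphy elements alone; in \cite{OP06} it requires an actual computation of the one-point invariants by virtual localization on moduli of relative stable maps and the evaluation of the resulting Hodge-integral generating series. Your argument never supplies this geometric input, so as written it is circular: the coefficients of the substitution rule you apply are defined in terms of the very invariants whose generating function you set out to compute. To close the gap you would need either the localization computation of $\langle \mu|\tau_k(pt)\rangle_{n=1}$ or some other independent determination of the $\rho_{k+1,\mu}$.
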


Motivated by this result, we make the following definition:

 \begin{definition}\label{def-operator}
 For $k\in\mathbb{Z}, k>0$, define the operator
 $$ M_k=\sum_{g\geq 0} \sum_{{\bf x}} \langle {\bf x}^+|\tau_k(pt) |{\bf x}^- \rangle_{g,1}^{\PP^1,|{\bf x}^+|, \bullet} \cdot u^{l-1+g}\cdot a_{x_1}\cdot \ldots \cdot a_{x_{k+2-2g}}\in \mathcal{H}[u],$$
 where the second sum goes over all ${\bf x} \in \ZZ^{k+2-2g}$ satisfying $$x_1\leq \ldots \leq x_l <0< x_{l+1}\leq \ldots \leq x_{k+2-2g}$$ and $$\sum_{i=1}^{k+2-2g}x_i=0.$$
 \end{definition}
 
\begin{theorem}\label{thm-GWIFock} The disconnected stationary relative descendant Gromov-Witten invariant of $\PP^1$ can be expressed as a matrix element in the Fock space:
\begin{equation}\label{eq-GWIFockunmarkedends}\langle \mu|\tau_{k_1}(pt)\ldots \tau_{k_n}(pt)|\nu\rangle_{g,n}^{\PP^1,|\mu|,\bullet}= \langle b_\mu|\mbox{Coeff}_{u^{g+\ell({\mu})-1}}(M_{k_1}\cdot \ldots \cdot M_{k_n})|b_\nu\rangle.\end{equation}
\end{theorem}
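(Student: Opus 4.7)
The plan is to compare two expansions of both sides. First, I would apply the degeneration formula (Theorem \ref{degfor}) to the LHS, degenerating the target $\PP^1$ into a chain of $n$ rational curves $X_1,\dots,X_n$ with the $i$-th descendant insertion $\tau_{k_i}(pt)$ supported on $X_i$. This rewrites the LHS as
\[
\sum_{\eta_1,\dots,\eta_{n-1}\vdash d}\;\prod_{j=1}^{n-1}\mathfrak{z}(\eta_j)\;\prod_{i=1}^n\langle\eta_{i-1}|\tau_{k_i}(pt)|\eta_i\rangle_{g_i,1}^{\PP^1,d,\bullet},
\]
with $\eta_0=\mu$, $\eta_n=\nu$, and with the genera $g_i$ constrained by an arithmetic-genus identity on the nodal source.

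Second, I would expand the bosonic matrix element. Inserting the completeness relation $\mathbb{1}=\sum_\lambda|b_\lambda\rangle\langle b_\lambda|/\mathfrak{z}(\lambda)$ between each pair of consecutive $M_{k_i}$ reduces the question to the single-operator matrix elements $\langle b_{\lambda_{i-1}}|M_{k_i}|b_{\lambda_i}\rangle$. I would compute these directly using $[a_n,a_{-m}]=n\delta_{n,m}$: the annihilation operators $a_{\beta_j}$ appearing in $M_{k_i}$ pick out an injection of the parts of $\beta={\bf x}^{(i),+}$ into the parts of $\lambda_i$, the creation operators $a_{-\alpha_j}$ then build up a state, and pairing with $\langle b_{\lambda_{i-1}}|$ forces the remainders to match. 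A short combinatorial calculation yields
\[
\langle b_\lambda|a_{-\alpha_1}\cdots a_{-\alpha_s}\,a_{\beta_1}\cdots a_{\beta_r}|b_{\lambda'}\rangle=\frac{\mathfrak{z}(\lambda)\mathfrak{z}(\lambda')}{\mathfrak{z}(\rho)}
\]
whenever $\lambda=\alpha+\rho$ and $\lambda'=\beta+\rho$ for a common partition $\rho$, and zero otherwise.

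The crux is to recognise the Wick data as degeneration data. The ``remainder'' partition $\rho_i$ produced in the Wick calculation corresponds geometrically to the rigid degree-$c$ covers $z\mapsto z^c$ of the target component $X_i$ which carry no descendant insertion; a dimension count on the relative moduli of $X_i$ forces any such component to be precisely this $\PP^1$-to-$\PP^1$ power map, and the automorphism groups $\mathbb{Z}/c$ assemble into the factor $1/\mathfrak{z}(\rho_i)$. With this identification, each Fock matrix element reorganises into a contribution to the degeneration-formula expression above, with ${\bf x}^{(i),\pm}$ playing the role of the ``essential'' relative data over $X_i$ and $\rho_i$ accounting for the trivial covers. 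A short calculation using the arithmetic-genus identity then confirms that $\sum_i(\ell({\bf x}^{(i),-})-1+g_i) = g+\ell(\mu)-1$ on the relevant stratum, so that extracting the coefficient of $u^{g+\ell(\mu)-1}$ selects exactly the terms appearing in the degeneration expansion.

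The principal obstacle is the combinatorial bookkeeping: one must reconcile the $\mathfrak{z}(\eta_j)$ gluing weights of Theorem \ref{degfor}, the $\mathfrak{z}(\lambda)\mathfrak{z}(\lambda')/\mathfrak{z}(\rho)$ Wick factors, the automorphism contributions $1/\mathfrak{z}(\rho_i)$ from the trivial covers, and the shift between the disconnected genus of the essential component (on the Fock side) and of the full degenerate source (on the degeneration side), all term by term. The cleanest route is to organise the Fock expansion by fixing the intermediate partitions $(\lambda_1,\dots,\lambda_{n-1})$ and recognising each resulting group of terms as the contribution of a single stratum in Theorem \ref{degfor}.
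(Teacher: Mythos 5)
Your proposal is correct in outline and would yield the theorem, but it is organized differently from the paper's argument. The paper does not expand the matrix element by inserting intermediate states; instead it reduces Equation \eqref{eq-GWIFockunmarkedends} to the equivalent form \eqref{eq-GWIFockmarked}, invokes Theorem \ref{thm-corres} to rewrite the left-hand side as a weighted count of tropical covers, and invokes Wick's theorem (Proposition \ref{prop-wick}) to rewrite the right-hand side as a weighted count of Feynman diagrams; the proof is then a bijection between tropical covers and equivalence classes of Feynman diagrams (diagrams differing only by markings of edge germs), with a check that multiplicities, automorphism factors, and the $u$-grading by genus match. Your route inlines the geometric input (you re-run the degeneration formula over a chain of $n$ rational curves, which is exactly how Theorem \ref{thm-corres} is proved) and coarsens the algebraic expansion (your resolution of the identity $\sum_\lambda |b_\lambda\rangle\langle b_\lambda|/\mathfrak{z}(\lambda)$ groups the Feynman diagrams of Proposition \ref{prop-wick} according to the multiset of edge weights crossing each gap, and your closed formula $\mathfrak{z}(\lambda)\mathfrak{z}(\lambda')/\mathfrak{z}(\rho)$ for $\langle b_\lambda|a_{-\alpha}a_{\beta}|b_{\lambda'}\rangle$ is correct and packages the germ-matching combinatorics in one stroke). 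What your version buys is a self-contained "transfer matrix" computation with no diagrammatics; what it loses is precisely the point of Section \ref{fock}, namely the graph-by-graph identification of Feynman diagrams with tropical covers.

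One place where you should tread carefully: the coefficients of $M_k$ in Definition \ref{def-operator} are written as \emph{disconnected} one-point invariants $\langle {\bf x}^+|\tau_k(pt)|{\bf x}^-\rangle^{\bullet}$, while your stratum-by-stratum matching treats ${\bf x}^{(i),\pm}$ as the data of the component carrying the marked point and pushes all trivial covers $z\mapsto z^c$ into the remainder $\rho_i$. If a trivial component could be absorbed either into the coefficient of $M_{k_i}$ or into $\rho_i$, your convolution would double count; so you must fix once and for all that the dimension count places every component without the marked point into $\rho_i$ (equivalently, that the vertex datum is the connected invariant, as in Definition \ref{tropgwi} and the multiplicity lemmas of Section \ref{subsec-g1mult}). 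This is a bookkeeping convention the paper also relies on implicitly, not a flaw specific to your argument, but in your formulation it is the one step where the "term by term" reconciliation you describe could silently go wrong.
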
 
 
 Notice first that for any operator $M\in \mathcal{H}$
 \begin{align*}
  \langle b_\mu|M|b_\nu\rangle &= \frac{1}{|\Aut(\mu)|\cdot |\Aut(\nu)|}\cdot \langle v_\emptyset | a_{\mu_1}\cdot \ldots\cdot a_{\mu_{\ell(\mu)}}\cdot M\cdot a_{-\nu_1}\cdot \ldots\cdot a_{-\nu_{\ell(\nu)}} |v_\emptyset\rangle\\ &=  \frac{1}{|\Aut(\mu)|\cdot |\Aut(\nu)|}\cdot \langle  a_{\mu_1}\cdot \ldots\cdot a_{\mu_{\ell(\mu)}}\cdot M\cdot a_{-\nu_1}\cdot \ldots\cdot a_{-\nu_{\ell(\nu)}} \rangle.
  \end{align*}
Thus Equation (\ref{eq-GWIFockunmarkedends}) is equivalent to the following equality involving a disconnected descendant Gromov-Witten invariant and a vacuum expectation as above:

\begin{align}
\label{eq-GWIFockmarked}
\begin{split}
& |\Aut(\mu)|\cdot |\Aut(\nu)| \cdot \langle \mu|\tau_{k_1}(pt)\ldots \tau_{k_n}(pt)|\nu\rangle_{g,n}^{\PP^1,|\mu|,\bullet}\\&= \langle  a_{\mu_1}\cdot \ldots\cdot a_{\mu_{\ell(\mu)}}\cdot\mbox{Coeff}_{u^{g+\ell({\mu})-1}}(M_{k_1}\cdot \ldots \cdot M_{k_n}) \cdot a_{-\nu_1}\cdot \ldots\cdot a_{-\nu_{\ell(\nu)}} \rangle.
 \end{split}
\end{align}

\subsection{Feynman diagrams as tropical curves}

Vacuum expectations like the one appearing in Equation (\ref{eq-GWIFockmarked}) can be computed as the weighted sum over \emph{Feynman graphs} associated to a monomial contributing to the value. This can be viewed as a variant of Wick's theorem \cite{Wic50}. A similar version can be found in~\cite[Proposition 5.2]{BG14}. In our situation, the Feynman graphs in question are essentially tropical covers and Theorem~\ref{thm-GWIFock} follows by combining Theorem~\ref{thm-corres} and Wick's Theorem.

\begin{definition}\label{def-Feynmangraph}
Let $P=m_0\cdot m_1\cdot \ldots \cdot m_n\cdot m_{n+1}$ where each $m_i$ is a normally ordered product of some $a_s$, and such that $m_0$ contains only factors $a_s$ with $s>0$ and $m_{n+1}$ contains only factors $a_s$ with $s<0$.

We associate graphs to $P$ called \emph{Feynman graphs} for $P$. 

Each Feynman graph of $P$ will have $n$ vertices $v_1,\ldots, v_n$, which are linearly ordered. 

The terms $a_s$ appearing in the product $m_i$ determine the local structure of the graph at $v_i$. For each factor $a_s$ appearing in $m_i$, we draw an edge germ of weight $|s|$ which is directed to the left if $s<0$ and to the right if $s>0$. 

The monomials $m_0$ and $m_{n+1}$ determine the unbounded edges of the Feynman graph.   For $m_0$, we draw (weighted) germs of ends left of $v_0$ (necessarily directed to the right). For $m_{n+1}$, we draw (weighted) germs of ends right of $v_n$ (necessarily directed to the left). All edge germs are marked to make them distinguishable. The resulting picture is called the Feynman fragment associated to $P$.

A Feynman graph for $P$ is any weighted graph on $v_1,\ldots,v_n$  completing the Feynman fragment, i.e.\ any weighted graph in which the marked weighted edge germs are completed to whole edges. Two edge germs can only be completed to an edge if 
\begin{itemize}
\item one is directed to the right and one to the left, and the vertex adjacent to the germ directed to the right is smaller than the one adjacent to the germ directed to the left, and
\item the two edge germs have the same weight.
\end{itemize}

\end{definition}

 \begin{example}\label{ex-Feynman}
Let $$P=(a_{1}^4)\cdot (a_{-1}^3 \cdot a_2 \cdot a_1) \cdot (a_{-2}\cdot a_{-1}\cdot a_1^3)\cdot (a_{-1}^4)$$
be a product satisfying the requirements of Definition \ref{def-Feynmangraph}. Notice that this product appears in the expansion of $\langle a_1^4\cdot M_3\cdot M_3\cdot a_{-1}^4\rangle$. Figure \ref{fig-edgegerms} shows the corresponding Feynman fragment for this product. The small numbers are markings for the edge germs, the bigger ones weights. Edge weights which are one are not indicated in the picture. Notice that the tropical covers depicted in Figure \ref{fig-exd4} on the top middle, and on the top right, come from Feynman diagrams completing these edge germs, after forgetting the marking of edge germs.
\begin{figure}
\includegraphics{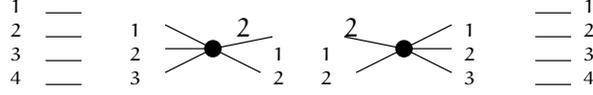}
\caption{The Feynman fragment corresponding to the product $P$ of Example \ref{ex-Feynman}. The small numbers are markings for the edge germs, the bigger ones weights. Edge weights which are one are not indicated in the picture.}\label{fig-edgegerms}
\end{figure}
 
 \end{example}
 
 \begin{proposition}[Wick's Theorem, see{~\cite[Proposition 5.2]{BG14}}]\label{prop-wick}
The vacuum expectation $\langle P\rangle$ for a product $P$ as in Definition \ref{def-Feynmangraph} equals the weighted sum of all Feynman graphs for $P$, where each Feynman graph is weighted by the product of weights of all edges (interior edges and ends).
\end{proposition}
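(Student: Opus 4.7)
My approach will be to invoke and verify the standard bosonic Wick's theorem for the Heisenberg algebra $\mathcal{H}$, adapted to the specific structure of $P$. The plan is to proceed by induction on the total number $N$ of operator factors obtained after expanding the normally ordered monomials $m_0, m_1, \ldots, m_{n+1}$ into an explicit product $a_{s_1} a_{s_2} \cdots a_{s_N}$. The base case $N=0$ is immediate: $\langle v_\emptyset | v_\emptyset\rangle = 1$, matching the empty Feynman graph of weight $1$. The inductive tools are the commutation relation $[a_i,a_j] = i\,\delta_{i,-j}\,a_0$ together with the vacuum-annihilation properties $a_s|v_\emptyset\rangle = 0$ for $s>0$ and $\langle v_\emptyset|a_s = 0$ for $s<0$.

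For the inductive step I consider the leftmost operator $a_{s_1}$. If $s_1 < 0$, then by hypothesis $m_0$ is empty and $a_{s_1}$ sits at the beginning of the product; $\langle v_\emptyset|a_{s_1} = 0$ immediately forces $\langle P\rangle = 0$, while on the combinatorial side the leftmost germ at $v_1$ is leftward-directed with no rightward germ to its left to pair with, so no Feynman graph completion exists. Otherwise $s_1 > 0$: I iteratively apply $a_{s_1}a_{s_j} = a_{s_j}a_{s_1} + s_1\delta_{s_1,-s_j}$ to sweep $a_{s_1}$ to the right. Each commutator term corresponds to ``contracting'' $a_{s_1}$ against a compatible creation partner $a_{-s_1}$ situated at a later position, producing the scalar factor $s_1$ (which is exactly the weight of the resulting edge) and a shorter expectation to which the inductive hypothesis applies. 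The final boundary term, in which $a_{s_1}$ has been commuted all the way past $|v_\emptyset\rangle$, vanishes because $a_{s_1}|v_\emptyset\rangle = 0$.

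Summing over the possible contraction partners of $a_{s_1}$ reconstructs the sum over Feynman graphs of $P$. The constraints built into Definition~\ref{def-Feynmangraph} match the algebraic nonvanishing conditions precisely: equal weights of the two germs corresponds to $s_j = -s_1$ in the Kronecker delta; opposite directions corresponds to one index being positive and the other negative; and the strict ordering requirement ``rightward germ at a smaller-indexed vertex than the leftward germ'' encodes the positional constraint $1 < j$ arising from sweeping $a_{s_1}$ rightward. The weight of a completed graph is then the product of all pair contributions $s_1 = |s_j|$, i.e.\ the product of edge weights in the statement.

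The main point requiring care is to verify that self-contractions within a single normally ordered block $m_i$ do not contribute: on the combinatorial side the strict inequality of vertex indices in the pairing rule excludes two germs at the same $v_i$, while on the algebraic side the fixed normally ordered arrangement within $m_i$ is precisely the order in which no commutator terms arise among those operators during the inductive reduction. Once this bookkeeping is in place, the algebraic Wick expansion matches the Feynman graph enumeration term by term. I do not expect any conceptual obstacle here — the result is a textbook application of Wick's theorem — but the most error-prone step is the translation between the four ``types'' of germs (interior rightward, interior leftward, boundary germs from $m_0$, and boundary germs from $m_{n+1}$) and the uncontracted versus contracted operators in the expansion; one may shorten the argument by citing the closely analogous \cite[Proposition~5.2]{BG14}, whose translation to our setting is routine.
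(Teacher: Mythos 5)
Your proposal is correct and follows essentially the same route as the paper: both arguments are the standard iterative application of the commutation relation $a_ia_j=a_ja_i+[a_i,a_j]$ to push annihilation operators toward the vacuum, with each surviving commutator term producing the scalar $|s|$ that becomes an edge weight, and with the vacuum-annihilation identities killing the boundary terms (the paper sweeps the rightmost positively-indexed operator, you sweep the leftmost operator, which is an immaterial bookkeeping difference). Your explicit check that the normal ordering within each block $m_i$ forbids self-contractions, matching the strict vertex-ordering condition in the Feynman-graph pairing rule, is exactly the point the paper's proof also relies on.
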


\begin{proof}
The proof goes by iteratively using the commutation relations for the Heisenberg algebra to move the operators $a_i$ with $i>0$ to the right.

Consider the rightmost $a_i$ with $i>0$ in the product $P$. If this is the rightmost term in the whole product, we obtain zero since by definition $a_i\cdot v_\emptyset=0$ for $i>0$. 

If this $a_i$ is not the rightmost term, then there is an operator $a_j$ appearing immediately after it, and we make the substitution
$$a_ia_j=a_ja_i+[a_i,a_j]$$

The first term on the right hand side has just made $a_j$ jump past the $a_i$.  When $j\neq -i$, this is the only contribution we have.

If $a_i$ is immediately followed by $a_{-i}$, however, we get a second term: $[a_i,a_{-i}]=i$.  In this term, the two operators $a_i$ and $a_{-i}$ have cancelled each other and produced the scalar $i$.  

We see that this replacement produces two terms, both of which are simpler than the original product $P$ -- the first one has an $a_i$ with $i>0$ further to the right, and the second term has less $a_i$ in the product.  Thus, we can iteratively repeat the substitution on each of the resulting terms, and eventually every term will either vanish because the rightmost $a_i$ has $i>0$, or we will get a term that consists solely of a scalar.

To each scalar term produced, we build a Feynman graph as follows: every time we cancel an $a_i$ and an $a_{-i}$ appearing right of $a_i$ in $P$, we add to the Feynman fragment of $P$ by drawing an edge connecting the germ corresponding to $a_i$ and the germ corresponding to $a_{-i}$.

Obviously, we draw all Feynman graphs completing the Feynman fragment for $P$ in this way. Each Feynman graph corresponds to a way to group the factors of $P$ in pairs $\{a_i,a_{-i}\}$ corresponding to edges completing the corresponding marked edge germs. Each such pair produces a contribution of $i$ because of the commutator relations, so altogether each Feynman graph should be counted with its weight to produce $\langle P \rangle$.
\end{proof}

\begin{proof}[Proof of Theorem \ref{thm-GWIFock}:]
As we have already seen, Theorem \ref{thm-GWIFock} is equivalent to Equation (\ref{eq-GWIFockmarked}). By Theorem \ref{thm-corres}, the left hand side equals the count of tropical covers satisfying properties (A)-(D) (up to the factor of $|\Aut(\mu)|\cdot |\Aut(\nu)|$, which can be viewed as numbers of ways to mark the ends).  By Proposition \ref{prop-wick}, each term contributing to the right hand side can be expressed in terms of a weighted count of suitable Feynman diagrams.  We show that the tropical covers contributing to the left hand side are essentially equal to the Feynman graphs contributing to the right (up to marking of edge germs), and that they are counted with the same weight on both sides.

Expand the right hand side as a sum of vacuum expectations, where each summand is of the form $w_P \cdot P$ for some scalar $w_P$ and $P=m_0\cdot \ldots\cdot m_{n+1}$ a product of the form described in Definition \ref{def-Feynmangraph}. For each summand, $m_0=\prod_{i}a_{\mu_i}$ and $m_{n+1}=\prod_{i}a_{-\nu_i}$. A factor $m_i$ for $i=1,\ldots,n$ comes from a summand of $M_{k_i}$, i.e.\ is of the form 
$$\langle {\bf x}^+|\tau_k(pt) |{\bf x}^- \rangle_{g_i,1}^{\PP^1,|{\bf x}^+|,\bullet} \cdot a_{x_1}\cdot \ldots \cdot a_{x_{k_i+2-g_i}}$$ where $g_i$ in the power of $u$.

  Enrich the Feynman fragment for $P$ by adding the genus $g_i$ to the vertex $i$ (imposed by the power of $u$). Any Feynman diagram completing this Feynman fragment can be interpreted as a tropical cover contributing to the tropical analogue of the left hand side. The balancing condition is satisfied at every vertex since we require $\sum x_i=0$ for the product above in $M_k$. The valence of vertex $i$ is $k_i+2-g_i$ as required. The degree of the tropical cover is given by $\mu$ and $\nu$. 

To see that the tropical cover is of the right genus, observe that the variable $u$ records the genus.  Built a Feynman graph from left to right, starting with the left ends, and adding in vertex after vertex from $1$ to $n$, taking the change in genus into account at each step. The genus of the graph consisting of $ \ell(\mu)$ left ends (at first disconnected) has genus $-\ell(\mu)+1$. For the vertex $i$ of local genus $g_i$, by definition of the operator $M_k$, we get a contribution of $u^{g_i+l_i-1}$, where $l_i$ denotes the number of incoming edges. Since $h$ incoming edges potentially close up $h-1$ cycles, the vertex $i$ increases the genus by $g_i+l_i-1$. By taking the $u^{g+\ell(\mu)-1}$ coefficient in total, we thus obtain tropical covers of genus $g$.

Each Feynman diagram for $P$ can, after forgetting the marking of edge germs, be viewed as a tropical cover contributing to the left hand side of \eqref{eq-GWIFockunmarkedends}, and vice versa, each tropical cover gives a Feynman graph.

We group Feynman graphs into equivalence classes if they only differ by the marking of their edge germs. These equivalence classes then bijectively correspond to tropical covers. The number of elements in an equivalence class for a given tropical cover $\pi:\Gamma\rightarrow \mathbb{R}$ equals
$$ \frac{1}{|\Aut(\pi)|}\cdot \prod_i |\Aut(\mu_i)|\cdot |\Aut(\nu_i)|,$$ where the product goes over all vertices $i=1,\ldots,n$ of $\Gamma$, $\mu_i$ denotes the partition of weights of incoming edges of $i$, and $\nu_i$ denotes the partition of weights of outgoing edges. 

It remains to show that an equivalence class of Feynman diagrams (resp.\ a tropical cover)  contributes to the left hand side (resp.\ right hand side) of \eqref{eq-GWIFockunmarkedends} with the same multiplicity. For the right hand side, note that a Feynman diagram contributes with its weight times $\prod_{i=1}^n \langle \mu_i| \tau_{k_i}(pt)|\nu_i\rangle_{g_i,1}^{\PP^1,|\mu_i|, \bullet}$, since the latter is the coefficient $w_P$ of the product $P$ in the expansion of the product of operators. 
Multiplying with the number of elements in an equivalence class of Feynman graphs, and with the nultiplicity of the Feynman graph, we obtain
$\frac{1}{|\Aut(\pi)|}\cdot \prod_{i=1}^n m_i\cdot \prod_e w(e)$,
where $m_i$ denotes the vertex multiplicity as defined in \eqref{localmv}. The latter equals the multiplicity of the tropical cover by Definition \ref{tropgwi}. 
  
\end{proof}

The expression of stationary relative Gromov-Witten invariants in terms of matrix elements in Theorem \ref{thm-GWIFock} is not completely satisfactory, since the operators $M_k$ used on the right hand side have one point Gromov-Witten invariants as coefficients themselves.
But we know by Theorem \ref{thm-onepointseries} that the one point Gromov-Witten invariants can be combined into a generating function which then can be given by a closed formula.

We can thus reformulate the operators used on the right hand side in Equation \ref{eq-GWIFockunmarkedends}.

\begin{definition}
Define $$\hat{a}_n=\begin{cases}u a_n &\mbox{ if } n<0\\ a_n &\mbox{ if } n>0 \end{cases}.$$

Let
$$ A_k^g= \frac{1}{(k+2-2g)!}\mbox{Coeff}_{w^0}\left(: \left(\sum_{x\in \ZZ}\mathcal{S}(xz)\hat{a}_x w^x \right)^{k+2-2g}:\right).$$

\end{definition}
\begin{proposition}
The operator series $M_k$ defined in Definition \ref{def-operator} satisfies
\begin{equation}
\label{eqtobeproved}
M_k=\mbox{Coeff}_{z^0}\left(\frac{1}{\mathcal{S}(z)}\cdot \sum_g z^{-2g} u^{g-1} A_k^g \right).
\end{equation}

\end{proposition}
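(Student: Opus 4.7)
The plan is to expand $A_k^g$ directly, identify the resulting scalar factors with the generating function of Theorem \ref{thm-onepointseries}, and thus recognize the right-hand side of \eqref{eqtobeproved} as $M_k$. Throughout I treat $z$, $w$, and $u$ as formal scalars commuting with the Heisenberg operators, so the only non-commutativity is among the $\hat{a}_x$ inside the normal-ordering symbol.

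First I would set $n = k+2-2g$ and formally expand
$$:\!\Bigl(\sum_{x} \mathcal{S}(xz)\hat{a}_x w^x\Bigr)^n\!: \;=\; \sum_{\bx \in (\ZZ\setminus\{0\})^n} \prod_i \mathcal{S}(x_iz)\, w^{x_i} \cdot :\!\hat{a}_{x_1} \cdots \hat{a}_{x_n}\!:.$$
Taking $\mathrm{Coeff}_{w^0}$ imposes $\sum x_i = 0$, and regrouping the sum by the underlying multiset---equivalently, by a pair of partitions $(\mu, \nu)$ recording the positive parts and the absolute values of the negative parts---yields $n!/(|\Aut(\mu)||\Aut(\nu)|)$ copies of each normal-ordered term. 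This factor cancels the $1/n!$ in the definition of $A_k^g$, while the $\ell(\nu)$ hats on negative-indexed operators contribute $u^{\ell(\nu)}$. Using the canonical ordering $x_1 \leq \cdots \leq x_\ell < 0 < x_{\ell+1} \leq \cdots \leq x_n$ to realize the normal-ordered product, I obtain
$$A_k^g = \sum_{(\mu, \nu):\, \ell(\mu)+\ell(\nu) = n} \frac{u^{\ell(\nu)}}{|\Aut(\mu)||\Aut(\nu)|} \prod_i \mathcal{S}(\mu_iz) \prod_j \mathcal{S}(\nu_jz) \cdot a_{x_1} \cdots a_{x_n}.$$

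Next I would substitute this into the right-hand side of \eqref{eqtobeproved}. For each fixed pair $(\mu, \nu)$, the quotient $\prod_i \mathcal{S}(\mu_iz)\prod_j \mathcal{S}(\nu_jz)/\mathcal{S}(z)$ is precisely the generating function of Theorem \ref{thm-onepointseries}, namely $|\Aut(\mu)||\Aut(\nu)|\sum_{g'\geq 0} \langle \mu|\tau_{2g'-2+\ell(\mu)+\ell(\nu)}(pt)|\nu\rangle_{g',1}^{\PP^1,|\mu|,\bullet} z^{2g'}$. Multiplying by $z^{-2g}$ and extracting $\mathrm{Coeff}_{z^0}$ forces $g' = g$, at which point the descendant index becomes exactly $2g-2+\ell(\mu)+\ell(\nu) = k$. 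The automorphism factors cancel, the two sources of $u$ combine as $u^{g-1}\cdot u^{\ell(\nu)} = u^{\ell-1+g}$, and re-indexing each pair $(\mu, \nu)$ as the ordered tuple $\bx$ recovers Definition \ref{def-operator} verbatim.

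Since the argument is essentially formal manipulation, the main difficulty lies in the bookkeeping: verifying that the number of orderings of a multiset is exactly $n!/(|\Aut(\mu)||\Aut(\nu)|)$ so that the $1/n!$ in $A_k^g$ cancels cleanly, confirming that the two independent sources of $u$ combine to the exponent $\ell-1+g$ demanded by Definition \ref{def-operator}, and justifying that $x = 0$ is to be excluded from the inner sum defining $A_k^g$ (since $\hat{a}_0$ is not introduced in the paper and would correspond to weight-zero edges that do not occur in any tropical cover).
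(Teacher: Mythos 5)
Your proposal is correct and follows essentially the same route as the paper's own proof: expand the normal-ordered power in $A_k^g$, use $\mathrm{Coeff}_{w^0}$ for the balancing condition $\sum x_i=0$, observe that each multiset $\bx$ (equivalently each pair $(\mu,\nu)$) occurs $(k+2-2g)!/|\Aut(\mu)||\Aut(\nu)|$ times so the prefactor cancels, let the hats supply $u^{\ell(\nu)}$, and then recognize $\prod\mathcal{S}(\mu_i z)\prod\mathcal{S}(\nu_j z)/\mathcal{S}(z)$ as the one-point series of Theorem \ref{thm-onepointseries}, with $\mathrm{Coeff}_{z^0}$ of $z^{-2g}(\cdot)$ selecting the genus-$g$ invariant. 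Your write-up is somewhat more explicit than the paper's (in particular on the exponent bookkeeping $u^{g-1}\cdot u^{\ell(\nu)}=u^{\ell-1+g}$ and on excluding $x=0$), but there is no substantive difference in method.
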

 
\begin{proof}
Expand the expression defining $A_k^g$. Every summand is a product with $k+2-2g$ factors $\mathcal{S}(x_1z)\cdot \ldots \cdot \mathcal{S}(x_{k+2-2g}z) \hat{a}_{x_1}\cdot \ldots \cdot \hat{a}_{x_{k+2-2g}}$. Taking the $w^0$ coefficient in the definition of $A_k^g$ takes care of the balancing condition $\sum x_i=0$. Taking the normal ordering product produces products which are ordered as required, with the negative indices coming first and the positive indices later. The convention for $\hat{a}_s$ guarantees that we obtain a factor of $u^l$, where $l$ denotes the number of negative indices. 

A summand $\mathcal{S}(x_1z)\cdot \ldots \cdot \mathcal{S}(x_{k+2-2g}z) \hat{a}_{x_1}\cdot \ldots \cdot \hat{a}_{x_{k+2-2g}}$ appears $\frac{(k+2-2g)!}{|\Aut({\bf}x)|}$ times in the expansion. The factorial $(k+2-2g)!$ cancels with the prefactor in the definition of $A_k^g$.

Now expand the right hand side of \eqref{eqtobeproved}. The factors $\mathcal{S}(x_iz)$ and $\frac{1}{|\Aut({\bf}x)|}$ from the above discussion together with the factor $\frac{1}{\mathcal{S}(z)}$ in the definition of $M_k$ give the one-point series. Taking the $z^0$ coefficient after multiplying each term with $z^{-2g}$ sorts out the right one-point invariant appearing as a coefficient in $M_k$.
\end{proof} 
 
\bibliographystyle{siam}
\bibliography{GWGUI}

\end{document}